\shorttitle{Stein's method and stochastic orderings} 
\begin{document}

\title{Stein's method and stochastic orderings}

\authorone[Universit\"at Z\"urich]{Fraser Daly}
\addressone{Institut f\"ur Mathematik, Universit\"at Z\"urich, Winterthurerstrasse 190, CH-8057 Z\"urich, Switzerland.  Email address: fraser.daly@math.uzh.ch}
\authortwo[Universit\'e Libre de Bruxelles]{Claude Lef\`evre}
\addresstwo{Universit\'e Libre de Bruxelles (U.L.B.), D\'epartement de Math\'ematique, Campus de la Plaine C.P. 210, B-1050 Bruxelles, Belgium.  Email address: clefevre@ulb.ac.be}
\authorthree[University of Nottingham]{Sergey Utev}
\addressthree{University of Nottingham, School of Mathematical Sciences, University Park, NG7 2RD Nottingham, United Kingdom.  Email address: sergey.utev@nottingham.ac.uk}

\begin{abstract}
A stochastic ordering approach is applied with Stein's method for approximation by the equilibrium distribution of a 
birth--death process. The usual stochastic order and the more general $s$-convex orders are discussed. Attention is focused on Poisson and translated Poisson approximation of a sum of dependent Bernoulli random variables, for example $k$--runs in i.i.d. Bernoulli trials.  Other applications include approximation by polynomial birth--death distributions.
\end{abstract}

\keywords{Stein's method; birth--death process; stochastic ordering; total variation distance; (in)dependent indicators; (translated) Poisson approximation; total negative and positive dependence; (approximate) local dependence; polynomial birth--death approximation; $k$--runs} 

\ams{62E17}{60F05; 60J80} 

\section{Introduction} \label{intro}

Stein's method has proved to be an effective tool in probability approximation, 
and has the advantage of being applicable
in the presence of dependence.  See, for example, Stein (1986), and Barbour and Chen (2005) 
for more recent developments.
It is well--known that error bounds obtained via Stein's method may be simplified under 
some assumptions on the dependence
present.  For example, in the presence of negative or positive relation, Stein's method gives simple error bounds in the Poisson
approximation of a sum of indicator random variables.  This is exploited throughout the work of Barbour \emph{et al.} (1992), and will be returned to in our Section \ref{pois}.

In this work, we consider the more general situation of approximation by the equilibrium distribution of a birth--death process,
and examine the situations in which Stein's method leads to simple, easily calculable error bounds.  These error bounds will
typically be differences of moments of our random variables.  As we will see, the assumptions under which we can obtain such error 
bounds are naturally phrased in terms of stochastic orderings.    

Consider a birth-death process on (some subset of) $\mathbb{Z}^+$
with birth rates $\alpha_j$ and death rates $\beta_j$ for $j\geq0$.
Suppose $\beta_0=0$. Let $\pi$ be the stationary distribution of such a
process, with $\pi_j=P(\pi=j)$, $j\geq 0$.  In this work we combine Stein's method with a 
stochastic ordering construction to consider the approximation by $\pi$ of some random variable 
$W$ on $\mathbb{Z}^+$. 

Our random variable $\pi$ satisfies the
identity $E[Ag(\pi)]=0$ for any bounded function $g:\mathbb{Z}^+\mapsto\mathbb{R}$, 
where $A$ is the linear operator defined by
\begin{equation} \label{1} 
Ag(j)=\alpha_j g(j+1)-\beta_j g(j), \quad j\geq 0.
\end{equation}
$A$ is a characterising operator for
$\pi$, in the sense that a random variable $Z=_d\pi$ if and only
if $E[Ag(Z)]=0$ for all $g$ bounded. The construction of such a
characterising operator is the basis of Stein's method for probability
approximation. See the books by Stein (1986), Barbour \emph{et al.}
(1992), Barbour and Chen (2005) and references therein.
For Stein's method applied to birth-death processes, see Brown and Xia (2001) and Holmes (2004).

Given some test function $h$, the so-called Stein equation is defined
by \begin{equation} \label{2}
h(j) - E[h(\pi)] =Af(j), \quad j\geq 0.
\end{equation}
Its solution is denoted $f=f_h=Sh$.  We call $S$ the Stein operator.
Bounds on $S$ are an essential ingredient of Stein's method.

Note that the solution $f$ of the Stein equation depends on the chosen test function 
$h$.  However, for notational convenience in much of the work that follows we will write $f$
rather than $f_h$ or $Sh$.  We will often choose $h(j)=I_{(j\in B)}$ for some $B\subseteq\mathbb{Z}^+$, 
in which case the solution $f$ will depend on the chosen set $B$.

There are several common distributions $\pi$ covered by this framework.
For each of the examples below, bounds are available on the corresponding Stein operator $S$.  
Theorem 2.10 of Brown and Xia (2001) may also be applied to give bounds on $S$
in many cases.
\begin{itemize}
\item If $\alpha_j=\lambda$ and $\beta_j=j$, then $\pi \sim \textrm{Po}(\lambda)$, the Poisson distribution with
mean $\lambda$.  See Barbour \emph{et al.} (1992) and references therein.
\item If $\alpha_j=q(r+j)$ and $\beta_j=j$,
then $\pi \sim \textrm{NB}(r,1-q)$ has a negative binomial distribution. See Brown and Phillips (1999).
\item If $\alpha_j=(n-j)p$ and $\beta_j=(1-p)j$, then $\pi \sim \textrm{Bin}(n,p)$.
See Ehm (1991).
\item In the geometric case, we may, of course,
use the negative binomial operator above. Alternatively we may choose $\alpha_j=q$ 
and $\beta_j=I_{(j\geq1)}$, so that 
$\pi \sim \textrm{Geom}(1-q)$. See Pek\"oz (1996).
\end{itemize}
The present work is organized as follows.  In Section \ref{sect2}, we will derive abstract error 
bounds using Stein's method
combined with some stochastic ordering assumptions in the setting of approximation by
the equilibrium distribution of a birth--death process.  In Section \ref{simple}, 
a simple sufficient condition under which these stochastic ordering
assumptions hold is considered, and some applications are given.  Section \ref{pois} 
discusses Poisson approximation for a sum of
dependent indicators.  We will see how concepts of negative and positive relation relate 
to our stochastic ordering assumptions,
and present generalizations of error bounds derived by Barbour \emph{et al.} (1992).  
Based on this work we move on, in Section \ref{tp}, to consider
translated Poisson approximation.  Applications here will include approximation of 
the number of $k$--runs in i.i.d. Bernoulli trials.
Finally, in Section \ref{bd3}, we give another abstract approximation theorem, 
and consider its application to  a sum of independent 
indicator random variables.    

\section{An abstract approximation theorem} \label{sect2}

Consider Stein's method for approximating the equilibrium distribution of a birth-death process. Our purpose in this section is to derive abstract error bounds under some stochastic ordering assumptions. 

\subsection{A first-order bound} \label{bd1}

Suppose that $W$ is a random variable supported on (some subset of) $\mathbb{Z}^+$
with $\mu_j=P(W=j)$, $j\geq 0$. Set $\mu_{-1}=0$. Our concern is the approximation
of such a variable $W$ by $\pi$, specifically by estimating the difference
$|Eh(W)-Eh(\pi)|$, i.e. $|E[Af(W)]|$. For this, a simple representation of this difference
will be applied with some stochastic ordering assumptions to yield bounds
using Stein's method. We may then bound, for example, the total variation
distance between $\mathcal{L}(W)$ and $\mathcal{L}(\pi)$, defined by
\begin{equation*}
d_{TV}(\mathcal{L}(W),\mathcal{L}(\pi))=\sup_{B\subseteq\mathbb{Z}^+}|P(W\in B)-P(\pi\in B)|.
\end{equation*}
Although we are mainly concerned with approximation in total variation distance, the results we 
derive may also be used with other probability metrics.

Let $\Delta$ be the forward difference operator. Since, with the operator (\ref{1}), 
the choice of $f(0)$ is arbitrary, we follow Brown and Xia (2001) and choose
$f(0)=0$. Writing $f(j)=\Delta f(0)+\cdots+\Delta f(j-1)$,
we thus obtain the representation
\begin{equation}\label{3}
Eh(W)-Eh(\pi) = \sum_{k=0}^\infty\Delta f(k) \sum_{j=k+1}^\infty 
(\alpha_{j-1}\mu_{j-1}-\beta_{j}\mu_{j}).
\end{equation}
In the next subsection, we will extend (\ref{3}) to include the $l$th forward 
differences of $f(\cdot)$, for all $l\geq 1$.

We now consider how this representation may be applied in conjunction with the usual 
stochastic ordering, denoted $\succeq_{st}$. Define two random variables $W_\alpha$ and
$W_\beta$ by 
\begin{equation}\label{4} 
P(W_\alpha=j)=\frac{\alpha_{j-1}
\mu_{j-1}}{E\alpha_W}, \; \mbox{ and } \; 
P(W_\beta=j)=\frac{\beta_{j}\mu_{j}}{E\beta_W}, \quad j\geq 1.
\end{equation}
If $W_\alpha\succeq_{st}W_\beta$ and $E\alpha_W\geq E\beta_W$, we have that
 $\sum_{j=i}^\infty\alpha_{j-1}\mu_{j-1}\geq\sum_{j=i}^\infty\beta_{j}\mu_{j}$ 
for all $i\geq1$. In this case, (\ref{3}) may be bounded to obtain
\begin{equation*}
|Eh(W)-Eh(\pi)| \;\leq\;
\lVert\Delta f\rVert_\infty \, E[\alpha_W (W+1) - \beta_W W].
\end{equation*}
A similar argument holds if we instead assume that $W_\beta\succeq_{st}W_\alpha$
and $E\beta_W\geq E\alpha_W$. We thus obtain the following result.

\begin{proposition} \label{res1}
Assume that one of the two following conditions holds:
\begin{equation}\label{5}
\mbox {either (i)} \;  W_\alpha\succeq_{st} W_\beta \; \mbox { with } 
 E\alpha_W\geq E\beta_W,  \mbox { or (ii)} \; W_\beta\succeq_{st}W_\alpha\; 
\mbox { with } \; E\beta_W\geq E\alpha_W.
\end{equation}
Then,
\begin{equation}\label{6}
|Eh(W)-Eh(\pi)| \;\leq\; \lVert\Delta Sh\rVert_\infty\,
\left|E[\alpha_W (W+1) - \beta_W W]\right|.
\end{equation}
\end{proposition}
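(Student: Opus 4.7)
The plan is to start from the representation \eqref{3} that is already derived in the preceding paragraph, then use the stochastic ordering hypothesis to convert the alternating-sign sum into one of definite sign, and finally recognize the resulting total as a moment expression.

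First I would verify the claim, stated but not justified in the paragraph before the proposition, that under condition (i) the inner tail sum $T_i := \sum_{j=i}^{\infty}(\alpha_{j-1}\mu_{j-1}-\beta_j\mu_j)$ is nonnegative for every $i\geq 1$. Writing $T_i$ in terms of the distributions in \eqref{4}, the hypothesis $W_\alpha\succeq_{st}W_\beta$ gives $E\beta_W\,P(W_\alpha\geq i)\geq E\beta_W\,P(W_\beta\geq i)$ after clearing denominators, which rearranges to $E\beta_W\sum_{j=i}^{\infty}\alpha_{j-1}\mu_{j-1}\geq E\alpha_W\sum_{j=i}^{\infty}\beta_j\mu_j$. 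Combining this with $E\alpha_W\geq E\beta_W$ (and positivity of the summands) yields $\sum_{j=i}^{\infty}\alpha_{j-1}\mu_{j-1}\geq\sum_{j=i}^{\infty}\beta_j\mu_j$, i.e. $T_i\geq 0$.

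Next I would plug this into \eqref{3}. Since each $T_{k+1}\geq 0$, I can extract $\Delta f(k)$ in absolute value to obtain
\begin{equation*}
|Eh(W)-Eh(\pi)|\;\leq\;\lVert\Delta f\rVert_\infty\sum_{k=0}^{\infty}T_{k+1}.
\end{equation*}
I would then swap the order of summation, using
\begin{equation*}
\sum_{k=0}^{\infty}\sum_{j=k+1}^{\infty}(\alpha_{j-1}\mu_{j-1}-\beta_j\mu_j)=\sum_{j=1}^{\infty}j(\alpha_{j-1}\mu_{j-1}-\beta_j\mu_j),
\end{equation*}
and after the index shift $i=j-1$ in the first piece (together with $\beta_0=0$) this collapses to $E[\alpha_W(W+1)]-E[\beta_W W]$. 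Under (i) this quantity is nonnegative, so its absolute value matches and we obtain \eqref{6}.

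For condition (ii) the argument is entirely symmetric: the stochastic ordering now forces each $T_i\leq 0$, the same summation-swap identity still evaluates to $E[\alpha_W(W+1)-\beta_W W]$ but with the opposite sign, and taking absolute values on both sides recovers \eqref{6}. The only real care point---and the step I expect to require the most attention---is the normalization juggling in the first paragraph: $\succeq_{st}$ compares the \emph{normalized} laws $W_\alpha,W_\beta$, so one must not forget to use the assumption $E\alpha_W\geq E\beta_W$ to clear the ratio of normalizing constants and reach the \emph{unnormalized} tail-sum inequality that drives the bound. An implicit assumption needed for the Fubini step and for all expectations to make sense is that $E[\alpha_W(W+1)]$ and $E[\beta_W W]$ are finite, which I would state at the outset.
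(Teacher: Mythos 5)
Your proposal is correct and follows essentially the same route as the paper: it uses the representation (\ref{3}), deduces the nonnegativity of the tail sums $\sum_{j\geq i}(\alpha_{j-1}\mu_{j-1}-\beta_j\mu_j)$ from the stochastic ordering together with $E\alpha_W\geq E\beta_W$, and then pulls out $\lVert\Delta f\rVert_\infty$ and resums to get the moment difference. The paper's own argument is just a terser version of this (it states the tail-sum inequality and the resulting bound without the intermediate normalization and Fubini details you supply), so there is nothing to flag.
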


\subsection{A $s$-order bound} \label{bd2}

We will now establish our main abstract result. For that, we will have 
recourse to the concept of discrete $s$-convex stochastic ordering, denoted 
$\succeq_{s-cx}$, for any integer $s\geq 1$. See, for example, Lef\`evre 
and Utev (1996) for this notion. Briefly, given any two non-negative 
integer-valued random variables $X$ and $Y$, one says that $X \succeq_{s-cx} Y$ when 
$$
E[f(X)]\leq E[f(Y)] \; \mbox{ for all }\mbox{$s$-convex functions }f, 
$$
that is, for all functions $f$ satisfying $\Delta^s f(j) \geq 0, \; j\geq 0$. 
Note that this ordering implies that $X$ and $Y$ have the same first $s-1$ moments.

To begin with, we introduce a Bernoulli random variable $v_p$ with
$$
P(v_p=1) = p = 1-P(v_p=0),
$$
independently of all other entries. We write $\alpha =E\alpha_W$, $\beta =E\beta_W$,
and in an analogous way to (\ref{4}), we define the random variables $W_\alpha$ and 
$W_\beta$ by
\begin{equation}\label{7} 
P(W_\alpha\in B) = \alpha^{-1} E[\alpha_W I_{(W+1\in B)}],\; \mbox { and } \;
P(W_\beta\in B) = \beta^{-1} E[\beta_W I_{(W\in B)}],
\end{equation}
for any Borel set $B$. For notational convenience, we choose to write 
$C^k_n = {n\choose k}$. 

The key theorem and an immediate corollary will be first stated, the proof of 
the theorem being given after.

\begin{proposition} \label{res3}
Assume that there exists a random variable $Y$ on $\mathbb{Z}^+$ such that 
$W_\beta-Y\geq0$ a.s. and
\begin{equation}\label{8}
W_\alpha \succeq_{s-cx} v_p(W_\beta -Y).
\end{equation} 
Then,
\begin{multline} \label{9}
|Eh(W)-Eh(\pi)| \leq \sum_{t=0}^{s-1}\;|\Delta^t Sh(0)|\; |E(\alpha_W C_{W+1}^{t})
- E(\beta_W C_{W}^{t})|\\
+ \|\Delta^s Sh\|_\infty \; (\alpha E[C_{W_\alpha}^{s}]
-2\alpha p E[C_{W_\beta-Y}^{s}] + (\alpha p+|\alpha p-\beta|) E[C_{W_\beta}^{s}]).
\end{multline}
\end{proposition}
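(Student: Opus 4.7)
The plan is to combine a discrete Taylor expansion of $f=Sh$ (normalised by $f(0)=0$) with the $s$-convex stochastic ordering hypothesis (\ref{8}). Using the definitions in (\ref{7}) one has $E[Af(W)] = \alpha E[f(W_\alpha)] - \beta E[f(W_\beta)]$, and the discrete Taylor formula
\[ f(j) \;=\; \sum_{l=0}^{s-1} \Delta^l f(0)\, C_j^l \;+\; R(j), \qquad R(j) \;=\; \sum_{k=0}^{j-s} \Delta^s f(k)\, C_{j-1-k}^{s-1}, \]
(with $R(j)=0$ for $j<s$) splits this as
\[ E[Af(W)] \;=\; \sum_{l=0}^{s-1}\Delta^l f(0)\,\bigl\{E[\alpha_W C^l_{W+1}] - E[\beta_W C^l_W]\bigr\} \;+\; \alpha E[R(W_\alpha)] - \beta E[R(W_\beta)]. \]
The triangle inequality applied to the polynomial part yields the first $s$ terms of (\ref{9}).

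The heart of the proof is to bound $|\alpha E[R(W_\alpha)] - \beta E[R(W_\beta)]|$ by the second term in (\ref{9}). The key structural observation is that the auxiliary functions
\[ \tilde R \;:=\; R + \|\Delta^s f\|_\infty C^s, \qquad \check R \;:=\; \|\Delta^s f\|_\infty C^s - R \]
are both non-negative (from the Taylor remainder bound $|R(j)|\le\|\Delta^s f\|_\infty C^s_j$), both $s$-convex (since $\Delta^s C^s\equiv 1$ dominates $|\Delta^s R|$), and both vanish on $\{0,\ldots,s-1\}$. A direct computation using the Taylor representation of any $s$-convex function vanishing on $\{0,\ldots,s-1\}$ shows that such a function is in fact non-decreasing on $\mathbb{Z}^+$; in particular $\tilde R$ and $\check R$ are non-decreasing.

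Equipped with this I would use the decomposition
\begin{align*}
\alpha E[R(W_\alpha)] - \beta E[R(W_\beta)] &= \alpha\bigl\{E[R(W_\alpha)] - p\,E[R(W_\beta-Y)]\bigr\} \\
&\quad + \alpha p\bigl\{E[R(W_\beta-Y)] - E[R(W_\beta)]\bigr\} + (\alpha p-\beta)\,E[R(W_\beta)],
\end{align*}
and handle the three summands in turn. Applying (\ref{8}) to the $s$-convex functions $\tilde R$ and $\check R$ (noting $\tilde R(0)=\check R(0)=0$, so the Bernoulli factor $v_p$ contributes the weight $p$) and cancelling the common $\|\Delta^s f\|_\infty C^s$ pieces, the first summand is controlled by $\alpha\|\Delta^s f\|_\infty(E[C^s_{W_\alpha}] - pE[C^s_{W_\beta-Y}])$. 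The monotonicity of $\tilde R,\check R$ combined with $W_\beta-Y\le W_\beta$ a.s.\ yields $E[\tilde R(W_\beta)]\ge E[\tilde R(W_\beta-Y)]$ and $E[\check R(W_\beta)]\ge E[\check R(W_\beta-Y)]$; translating each inequality back to $R$ gives the two-sided bound $|E[R(W_\beta-Y)] - E[R(W_\beta)]|\le\|\Delta^s f\|_\infty(E[C^s_{W_\beta}] - E[C^s_{W_\beta-Y}])$ for the second summand. The third is bounded trivially by $|\alpha p-\beta|\,\|\Delta^s f\|_\infty E[C^s_{W_\beta}]$ via $|R(j)|\le\|\Delta^s f\|_\infty C^s_j$. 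Adding these contributions and simplifying gives exactly the second term of (\ref{9}).

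The main obstacle will be the middle estimate $|E[R(W_\beta-Y)] - E[R(W_\beta)]|\le\|\Delta^s f\|_\infty(E[C^s_{W_\beta}] - E[C^s_{W_\beta-Y}])$. A naive triangle inequality here would only give $\|\Delta^s f\|_\infty(E[C^s_{W_\beta}] + E[C^s_{W_\beta-Y}])$, losing the negative contribution $-2\alpha p E[C^s_{W_\beta-Y}]$ appearing in (\ref{9}). The sharper bound relies crucially on the structural fact that $s$-convex functions vanishing on $\{0,\ldots,s-1\}$ are automatically non-decreasing, which is precisely the feature of $\tilde R$ and $\check R$ that we exploit.
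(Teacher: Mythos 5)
Your argument is correct and follows essentially the same route as the paper: the same discrete Taylor representation splitting off the polynomial part, and the same three-term decomposition inserting $v_p(W_\beta-Y)$ and $v_p W_\beta$ as intermediaries, with the $s$-convex ordering controlling the first piece, monotonicity together with $W_\beta-Y\le W_\beta$ the second, and the crude bound $|R(j)|\le\|\Delta^s f\|_\infty C^s_j$ the third. The only difference is one of packaging: the paper's auxiliary lemma applies the ordering and monotonicity termwise to the functions $w_k(x)=C^{s-1}_{x-k-1}$ (each $s$-convex, non-decreasing, vanishing at $0$) and sums the resulting signed differences over $k$, whereas you aggregate into the remainder $R=\sum_k\Delta^s f(k)\,w_k$ first and sandwich it via $\tilde R$ and $\check R$ --- your observation that an $s$-convex function vanishing on $\{0,\dots,s-1\}$ is non-decreasing is exactly the representation $g=\sum_k\Delta^s g(k)\,w_k$ with non-negative coefficients that drives the paper's termwise argument.
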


Consider the special case of (\ref{8}) when $p=1$ and $Y=0$ a.s. When $\alpha = \beta$ and under the condition (\ref{11}) below, one has that 
\begin{equation*}
E[\alpha_W (W+1)^{t}] = E[\beta_W W^{t}], \quad t=0, \ldots, s-1,
\end{equation*}
so that the inequality (\ref{9}) reduces to (\ref{12}).
\begin{corollary} \label{res4}
Assume that $\alpha = \beta$, and one of the two following conditions holds:
\begin{equation}\label{11}
\mbox { either (i) } \; W_\alpha\succeq_{s-cx} W_\beta, \; 
\mbox { or (ii) } \; W_\beta\succeq_{s-cx}W_\alpha.
\end{equation}
Then,
\begin{equation}\label{12}
|Eh(W)-Eh(\pi)| \;\leq\; \|\Delta^s Sh\|_\infty \, |E[\alpha_W C_{W+1}^{s}] -
E[\beta_W C_{W}^{s}]|.
\end{equation}
\end{corollary}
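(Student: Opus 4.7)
The plan is to deduce this corollary as a specialization of Proposition \ref{res3} to $p = 1$ and $Y \equiv 0$. With these choices, $v_p(W_\beta - Y) = W_\beta$, so hypothesis (\ref{8}) collapses exactly to case (i) of (\ref{11}), i.e.\ $W_\alpha \succeq_{s-cx} W_\beta$. Case (ii) will follow from the symmetric variant of Proposition \ref{res3} in which the ordering between $W_\alpha$ and $W_\beta$ is reversed, in the same manner in which Proposition \ref{res1} accommodates both of its cases.

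The first key step is to observe that, under either alternative of (\ref{11}), every summand in the first sum on the right of (\ref{9}) vanishes. An $s$-convex ordering forces the first $s - 1$ factorial moments of the two variables to coincide: for each $t \leq s - 1$, both $\binom{k}{t}$ and $-\binom{k}{t}$ have vanishing $s$-th forward difference and hence are $s$-convex, so applying $\succeq_{s-cx}$ in both directions yields $E[C^t_{W_\alpha}] = E[C^t_{W_\beta}]$. Combining this with $\alpha = \beta$ and the identities $\alpha E[C^t_{W_\alpha}] = E[\alpha_W C^t_{W+1}]$ and $\beta E[C^t_{W_\beta}] = E[\beta_W C^t_W]$ afforded by (\ref{7}) gives $E[\alpha_W C^t_{W+1}] = E[\beta_W C^t_W]$ for each $t < s$, which annihilates the entire low-order sum.

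The second step is a direct substitution into the remaining high-order term of (\ref{9}). With $p = 1$, $Y = 0$, and $\alpha = \beta$, the coefficient of $E[C^s_{W_\beta}]$ telescopes to $-2\alpha + \alpha + |\alpha - \alpha| = -\alpha$, so the term reduces to $\alpha(E[C^s_{W_\alpha}] - E[C^s_{W_\beta}]) = E[\alpha_W C^s_{W+1}] - E[\beta_W C^s_W]$ by (\ref{7}). The main obstacle is a sign issue: since $\binom{k}{s}$ is itself $s$-convex, the $s$-convex ordering in (\ref{11}) controls the \emph{sign} of this residual (non-positive under case (i), non-negative under case (ii)), and since the bound must control the non-negative quantity $|Eh(W) - Eh(\pi)|$, the residual enters as its absolute value, producing exactly the bound (\ref{12}) in both cases.
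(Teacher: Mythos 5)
Your proposal is correct and follows essentially the same route as the paper, which likewise obtains the corollary by specializing Proposition \ref{res3} to $p=1$ and $Y=0$ a.s., using the equality of the first $s-1$ (factorial) moments implied by the $s$-convex ordering together with $\alpha=\beta$ to annihilate the low-order sum, and simplifying the remaining term of (\ref{9}). Your added care over the sign of the residual and the symmetric treatment of case (ii) merely makes explicit what the paper leaves implicit.
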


We note that Proposition \ref{res1} does not follow as a special case of Corollary \ref{res4},
since this latter result requires the condition $\alpha=\beta$ not needed in Proposition \ref{res1}.

\begin{proof}[Proof of Proposition \ref{res3}]
In the first step we derive a representation 
of $E[Af(W)]$ that generalizes the representation (\ref{3}). Observe 
that (\ref{1}) and (\ref{7}) give
$$
E[Af(W)] =E[\alpha_W f(W+1)] - E[\beta_W f(W)]
= \alpha E[f(W_\alpha)] - \beta E[f(W_\beta)].
$$
Expanding the function $f$ by the discrete Taylor formula, we obtain, for any 
$s=1,2,\ldots$,
$$
f(x) = f(0)+\sum_{k=0}^\infty \Delta f(k)\; I_{(x>k)}
=\sum_{t=0}^{s-1}\Delta^t f(0) \; C_{x}^{t} \;+\;
\sum_{k=0}^\infty \Delta^s f(k) \; C_{x-k-1}^{s-1};
$$
see Lef\`evre and Utev (1996). Thus, we find that 
\begin{eqnarray}\label{13}
E[Af(W)]&=&\sum_{t=0}^{s-1}\Delta^t f(0) \; E[AC_{W}^{t}] +
\sum_{k=0}^\infty \Delta^s f(k) \; E[A C_{W-k-1}^{s-1}]\nonumber\\
&=&\sum_{t=0}^{s-1}\Delta^t f(0)\; (\alpha E[C_{W_\alpha}^{t}]
- \beta E[C_{W_\beta}^{t}])\nonumber\\
&& \; + \; \sum_{k=0}^\infty \Delta^s f(k)\; 
(\alpha E[C_{W_\alpha-k-1}^{s-1}]- \beta E[C_{W_\beta-k-1}^{s-1}]).
\end{eqnarray}

Our next step is to derive an abstract metrics-ordering relationship result,
which is stated below as a separate lemma. Using the bound (\ref{15}) in the 
representation (\ref{13}) then leads to the announced bound (\ref{12}). 
\end{proof}

\begin{lemma} \label{res5}
Let $X$, $Y$ and $Z$ be random variables on $\mathbb{Z}^+$ such that 
\begin{equation}\label{14}
Z-Y \geq 0 \; \mbox { a.s., } \; \mbox { and } \; X\succeq_{s-cx} v_p(Z-Y). 
\end{equation}
Then, for all $a,b \in \mathbb{R^+}$,
\begin{equation}\label{15}
\sum_{k=0}^\infty |a E[C^{s-1}_{X-k-1}] - b E[C^{s-1}_{Z-k-1}]|
\;\leq\; aE[C^s_{X}] - 2ap E[C^s_{Z-Y}]
 + (ap +|ap-b|) E[C^s_{Z}].
\end{equation}
\end{lemma}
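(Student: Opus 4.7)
The plan is to prove the bound pointwise in $k$ and then sum using a hockey-stick identity. First, I would verify that for each fixed $k\geq 0$ the function $f_k(x):=C^{s-1}_{x-k-1}$ (with the convention $C^n_j=0$ when $n<j$) is itself $s$-convex on $\mathbb{Z}^+$: iterating the Pascal relation gives $\Delta^j f_k(x)=C^{s-1-j}_{x-k-1}$ for $0\leq j\leq s-1$, and one more difference produces $\Delta^s f_k(x)=I_{(x=k)}\geq 0$. The same computation shows $f_k$ is nondecreasing.

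With these properties I would exploit the two hypotheses in (\ref{14}) separately. Since $C^{s-1}_{-k-1}=0$, the ordering $X\succeq_{s-cx}v_p(Z-Y)$ applied to $f_k$ yields $E[f_k(X)]\geq pE[f_k(Z-Y)]$, so that $D_k:=E[f_k(X)]-pE[f_k(Z-Y)]\geq 0$. The almost-sure inequality $Z-Y\leq Z$ together with monotonicity of $f_k$ gives $E_k:=E[f_k(Z)]-E[f_k(Z-Y)]\geq 0$.

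The key algebraic step is the decomposition
\begin{equation*}
aE[f_k(X)]-bE[f_k(Z)] \;=\; aD_k - apE_k + (ap-b)E[f_k(Z)],
\end{equation*}
which is an identity verified by expanding the right-hand side. Taking absolute values and using $D_k,E_k,E[f_k(Z)]\geq 0$ together with the triangle inequality yields
\begin{equation*}
|aE[f_k(X)]-bE[f_k(Z)]| \;\leq\; aD_k + apE_k + |ap-b|\,E[f_k(Z)].
\end{equation*}

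Finally, I would sum over $k\geq 0$. The hockey-stick identity $\sum_{k=0}^\infty C^{s-1}_{u-k-1}=C^s_u$, valid for every $u\in\mathbb{Z}^+$, gives $\sum_{k=0}^\infty E[f_k(U)]=E[C^s_U]$ for any $\mathbb{Z}^+$-valued $U$. Applying this to $U=X$, $Z-Y$ and $Z$ and collecting terms produces precisely the right-hand side of (\ref{15}). The only nontrivial step is guessing the decomposition above; once one recognises that the two hypotheses in (\ref{14}) are exactly what is needed to furnish the nonnegative quantities $D_k$ and $E_k$, the rest is bookkeeping.
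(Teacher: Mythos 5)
Your proof is correct and follows essentially the same route as the paper: your three-term decomposition into $aD_k - apE_k + (ap-b)E[f_k(Z)]$ is exactly the paper's splitting into $S_1+S_2+S_3$ (once one notes $E[w_k(v_pU)]=pE[w_k(U)]$ because $w_k(0)=0$), with the same use of the $s$-convex and stochastic orderings to fix the signs and the same hockey-stick summation $\sum_k C^{s-1}_{u-k-1}=C^s_u$. The only difference is cosmetic: you make explicit the $s$-convexity and monotonicity of $w_k$, which the paper invokes as standard.
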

\begin{proof}
Letting
$$
w_k^{(s)}(x) =w_{k}(x) = C_{x-k-1}^{s-1},
$$
we get that
\begin{multline} \label{16}
 \sum_{k=0}^\infty |a E(C^{s-1}_{X-k-1}) - b E(C^{s-1}_{Z-k-1})| 
\;\; = \;\; \sum_{k=0}^\infty |a E[w_k(X)] - b E[w_k(Z)]|  \\
\leq a \sum_{k=0}^\infty |E[w_k(X)]-E[w_k(v_p(Z-Y))]| +
a \sum_{k=0}^\infty |E[w_k(v_p Z)]-E[w_k(v_p(Z-Y))]| \\
  + \sum_{k=0}^\infty |a E[w_k(v_p Z)]-b E[w_k(Z)]| \;\; = \;\; S_1+S_2+S_3.
\end{multline}
Let us examine the three sums in (\ref{16}). First, we easily check that
\begin{equation}\label{17}
\sum_{k=0}^\infty E[w_k(Z)]=E[C_{Z}^{s}].
\end{equation}
Using (\ref{17}), we successively find that 
$$
S_3 \;=\; |ap-b| \sum_{k=0}^\infty E[w_k(Z)] 
\;=\; |ap-b| E[C_{Z}^{s}];
$$
since $Z-Y\geq 0$ and $Z\succeq_{st} Z-Y$,
$$
S_2 \;=\; ap \sum_{k=0}^\infty (E[w_k(Z)]-E[w_k(Z-Y)])
\;=\; ap (E[C_{Z}^{s}]-E[C_{Z-Y}^{s}]);
$$
finally, by the assumption (\ref{14}) and a standard property of the order $\succeq_{s-cx}$,
$$
S_1 \;=\; a \sum_{k=0}^\infty [Ew_k(X)- p Ew_k(Z-Y)]
\;=\; a (E[C_{X}^{s}] - pE[C_{Z-Y}^{s}]).
$$
Inserting these three terms in (\ref{16}), we then deduce the bound (\ref{15}).
\end{proof}

\begin{remark}
For $s=p=1$ and $a=b=1$, Lemma \ref{res5} states 
that if $X\succeq_{st} Z-Y \geq 0$, then
an upper bound for the Wasserstein distance between $\mathcal{L}(X)$ and 
$\mathcal{L}(Z)$ is 
\begin{equation}\label{18}
d_{W}(\mathcal{L}(X),\mathcal{L}(Z)) \;=\; \sum_{k=0}^\infty |P(X>k)-P(Z>k)| \;\leq\; 2EY + EX -EZ.
\end{equation}
This bound is of interest in the stochastic ordering context investigated by Kamae 
\emph{et al.} (1977), with random variables on 
$\mathbb{Z}^+$ here. Note that by choosing the optimal coupling $X$, $Z$ and $Y=(Z-X)_+$, 
(\ref{18}) gives the exact bound since 
$$
d_{W}(\mathcal{L}(X),\mathcal{L}(Z)) \;\leq\; 2 E(Z-X)_+  + EX - EZ 
\;=\; E|Z-X| \;=\; d_{W}(\mathcal{L}(X),\mathcal{L}(Z)).
$$
It is worth indicating that an analogous argument allows us to show that 
the same bound (\ref{18}) holds under the single condition
$X+Y\succeq_{st} Z$.
A priori, this result seems to be preferable, since the extra condition 
$Z-Y\geq 0$ is not required. One can see, however, that $X\succeq_{st} Z-Y$ 
does not imply $X+Y\succeq_{st} Z$ in general. As an example, choose $X=U$, 
$Y=U$ and $Z=n$ a.s., where $n$ is any fixed positive integer and $U$ is 
discrete uniform on the set $\{0,1,\ldots,n\}$. Then, $X=U =_d n-U=Z-Y$ so 
that $X\succeq_{st} Z-Y$, but $X+Y=2U$ is not $\succeq_{st}$ than $n=Z$.
\end{remark}

\section{A simple sufficient condition and examples} \label{simple}

In practice, it may be difficult to check directly such conditions as stochastic ordering between $W_\alpha$ and $W_\beta$, as required by (\ref{5}) and (\ref{11}). 
It is thus useful to have available a simple sufficient condition which
we may then apply.

Throughout this subsection, we assume that $\alpha=\beta$ and  
$W_\alpha$ and $W_\beta$ have equal moments of order $t=1,\ldots, s-1$.
That is, we assume
$$
\mbox{condition } (A_s):\quad\quad\quad 
E[\alpha_W (W+1)^t]=E[\beta_W W^t],\hspace{20pt} t=0,\ldots, s-1.$$
A well-known Karlin-Novikoff sufficient condition to guarantee the 
$s$-convex ordering in (\ref{11}) under $(A_s)$ is that our sequence 
$\{\alpha_{j-1}\mu_{j-1} - \beta_j\mu_j\}$ has at most $s$ changes of sign. 

\begin{proposition} \label{prk_n}
Suppose that the condition $(A_s)$ is satisfied and that the 
sequence $\{\alpha_{j-1}\mu_{j-1} - \beta_j\mu_j\}$ has at most $s$ changes of sign. 
Then (\ref{12}) holds.
\end{proposition}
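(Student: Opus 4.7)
The plan is to use the Karlin--Novikoff variation-diminishing criterion to translate the sign-change hypothesis into one of the two $s$-convex orderings assumed in Corollary \ref{res4}, and then simply invoke that corollary.

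First, because $\alpha = \beta$, the definitions in (\ref{4}) give
\[
P(W_\alpha = j) - P(W_\beta = j) \;=\; \alpha^{-1}(\alpha_{j-1}\mu_{j-1} - \beta_j \mu_j),
\]
so the sign-change pattern of the given sequence $\{\alpha_{j-1}\mu_{j-1} - \beta_j \mu_j\}$ coincides with that of the signed mass function $P_{W_\alpha} - P_{W_\beta}$. Moreover, the index shift $k = j-1$ in $E[W_\alpha^t] = \sum_{j\ge 1} j^t \alpha_{j-1}\mu_{j-1}/\alpha$ yields $E[W_\alpha^t] = \alpha^{-1} E[\alpha_W (W+1)^t]$, and an analogous computation gives $E[W_\beta^t] = \beta^{-1} E[\beta_W W^t]$. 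Under $\alpha=\beta$, condition $(A_s)$ is therefore exactly the statement that $W_\alpha$ and $W_\beta$ share their first $s-1$ moments.

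I would then invoke the discrete Karlin--Novikoff theorem in the form developed in Lef\`evre and Utev (1996): if two non-negative integer-valued random variables have equal moments of orders $0,1,\ldots,s-1$ and if their mass functions differ by a sequence exhibiting at most $s$ sign changes, then one of them dominates the other in the $\succeq_{s-cx}$ order, the direction being determined by the first non-zero term of the sequence. Applied to $W_\alpha$ and $W_\beta$, this gives precisely one of the alternatives in (\ref{11}), so the bound (\ref{12}) follows from Corollary \ref{res4}.

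The only genuine obstacle is having the Karlin--Novikoff criterion available in this discrete $s$-convex form; once it is cited, the remainder is a bookkeeping translation between the given sequence, the signed mass function of $W_\alpha$ versus $W_\beta$, and the moment condition $(A_s)$.
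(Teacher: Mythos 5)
Your proposal is correct and follows essentially the same route as the paper: the paper's (implicit) proof is precisely to note that, under $(A_s)$, the Karlin--Novikoff cut criterion applied to the signed sequence $\{\alpha_{j-1}\mu_{j-1}-\beta_j\mu_j\}$ (which, up to the factor $\alpha^{-1}=\beta^{-1}$, is the difference of the mass functions of $W_\alpha$ and $W_\beta$) yields one of the two orderings in (\ref{11}), after which Corollary \ref{res4} gives (\ref{12}). Your bookkeeping identifying $(A_s)$ with the equality of the first $s-1$ moments of $W_\alpha$ and $W_\beta$ is exactly the intended reduction.
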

As a consequence, we obtain the following corollary, which extends Proposition A.1 of 
Barbour and Pugliese (2000) to birth-death processes.

\begin{corollary} \label{res2}
Suppose that $E\alpha_W=E\beta_W$.  If the sequence
$\{\alpha_{j-1}\mu_{j-1} - \beta_j\mu_j\}$ is monotone,
then $W_\alpha$ and $W_\beta$ are stochastically ordered,
so that the inequality (\ref{6}) may be applied.
\end{corollary}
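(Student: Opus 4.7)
The plan is to prove this as a direct consequence of Proposition \ref{res1} by checking the stochastic ordering between $W_\alpha$ and $W_\beta$ via the tail-sum characterization already spelled out in the text. Recall that the discussion preceding Proposition \ref{res1} notes that, when $E\alpha_W = E\beta_W$, the relation $W_\alpha \succeq_{st} W_\beta$ is equivalent to
\[
T_i \;:=\; \sum_{j=i}^\infty \bigl(\alpha_{j-1}\mu_{j-1} - \beta_j \mu_j\bigr) \;\geq\; 0 \quad \text{for all } i \geq 1,
\]
and the reverse ordering corresponds to $T_i \leq 0$ for every $i$. (One may alternatively view the statement as the $s=1$ specialisation of Proposition \ref{prk_n}, since monotonicity is strictly stronger than ``at most one change of sign'' and $\succeq_{1-cx}$ coincides with $\succeq_{st}$, but a direct argument is shorter.)

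First I would use $\beta_0 = 0$ together with the hypothesis to record that
\[
T_1 \;=\; \sum_{j=1}^\infty \alpha_{j-1}\mu_{j-1} \;-\; \sum_{j=0}^\infty \beta_j \mu_j \;=\; E\alpha_W - E\beta_W \;=\; 0.
\]
Next, writing $a_j := \alpha_{j-1}\mu_{j-1} - \beta_j \mu_j$ and assuming without loss of generality that $\{a_j\}$ is non-decreasing (the non-increasing case is symmetric), I would observe that a non-decreasing sequence with vanishing total sum must have at most one sign change, with the non-positive terms occurring first: there exists $j^\star$ such that $a_j \leq 0$ for $j \leq j^\star$ and $a_j \geq 0$ for $j > j^\star$. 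For $i > j^\star$ the tail $T_i$ is a sum of non-negative terms, hence $T_i \geq 0$; for $1 \leq i \leq j^\star$, telescoping from $T_1 = 0$ gives $T_i = -\sum_{j=1}^{i-1} a_j \geq 0$, since each omitted $a_j$ is non-positive. Thus $T_i \geq 0$ for every $i \geq 1$, yielding $W_\alpha \succeq_{st} W_\beta$.

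With either ordering established and $E\alpha_W = E\beta_W$, one of the two alternatives in (\ref{5}) holds, and Proposition \ref{res1} delivers the bound (\ref{6}). There is no real obstacle in this argument; the only substantive point is the elementary observation that a monotone sequence with zero total sum can have at most one change of sign, after which the equivalence between tail-sum positivity and $\succeq_{st}$ (which relies crucially on the normalisations in (\ref{4}) being equal, i.e.\ on $E\alpha_W = E\beta_W$) finishes the proof.
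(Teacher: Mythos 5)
Your proof is correct and follows essentially the same route as the paper: the paper obtains Corollary \ref{res2} as the $s=1$ case of Proposition \ref{prk_n} (a monotone sequence has at most one sign change, condition $(A_1)$ is exactly $E\alpha_W=E\beta_W$, and $\succeq_{1-cx}$ is the usual stochastic order), citing the Karlin--Novikoff crossing criterion rather than proving it. Your contribution is simply to make that cited step explicit via the tail-sum computation $T_1=0$ (using $\beta_0=0$) and the observation that a monotone sequence crossing zero once forces all tails to have a common sign, which is a correct and self-contained verification of what the paper leaves to the reference.
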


We illustrate these results with the following examples.

\begin{example}
Our first example is motivated by Phillips and Weinberg (2000).
Let $W$ have a Bose-Einstein occupancy distribution.  That  is, given
$m, d \geq 1$,
$$ 
\mu_j=P(W=j)=\binom{d+m-j-2}{m-j}\binom{d+m-1}{m}^{-1},
\quad 0\leq j\leq m.
$$
We wish to approximate $W$ by $\pi \sim \textrm{Geom}(p)$ where
$p=(d-1)/(d+m-1)$.  Let $q=1-p$. To obtain our geometric law, we choose 
$\alpha_j=q$ and $\beta_j=I_{(j>0)}$, $j\geq 0$ as birth 
and death rates.

Firstly, one can easily check that in this case, $E\alpha_W=E\beta_W$ and
the sequence $\{q\mu_{j-1}-\mu_j\}$ is non-decreasing,
so that $W_\alpha\succeq_{st}W_\beta$. Using Corollary \ref{res2},
the bound (\ref{6}) then becomes
\begin{equation} \label{19}
|Eh(W)-Eh(\pi)| \;\leq\; p\, \lVert\Delta Sh\rVert_\infty\; |EW-E\pi|.
\end{equation}
Moreover, it is known (see Pek\"oz (1996, Section 2)) that the Stein operator $S$ admits 
here the representation
\begin{equation*} 
Sh(j)=-\sum_{i=j}^\infty\left[h(i)-Eh(\pi)\right]q^{i-j}.
\end{equation*}
From this, we find that $\Delta Sh(k)=-\sum_{i=k}^\infty\Delta h(i)q^{i-k}$, which 
leads to the bound
$$
\lVert\Delta Sh\rVert_\infty\leq\ p^{-1} \lVert\Delta h\rVert_\infty.
$$
Inserting this bound in (\ref{19}) yields the following.
\begin{corollary} \label{cor_ex1}
With $W$ and $\pi$ as above,
\begin{equation*}
|Eh(W)-Eh(\pi)| \;\leq\; \lVert\Delta h\rVert_\infty\, \frac{m}{d(d-1)}.
\end{equation*}
In particular, $d_{TV}(\mathcal{L}(W),\mathcal{L}(\pi))\leq m/d(d-1)$.
\end{corollary}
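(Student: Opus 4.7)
The plan is to simply combine the ingredients already assembled in (\ref{19}) and the displayed bound on $\lVert \Delta Sh \rVert_\infty$, and then perform the two mean computations for $W$ and $\pi$. First I would observe that substituting the Pek\"oz-style bound $\lVert \Delta Sh\rVert_\infty \leq p^{-1}\lVert \Delta h\rVert_\infty$ into (\ref{19}) makes the factor $p$ cancel, leaving
\begin{equation*}
|Eh(W)-Eh(\pi)| \;\leq\; \lVert \Delta h\rVert_\infty\; |EW - E\pi|,
\end{equation*}
so the whole task reduces to evaluating $|EW-E\pi|$ in closed form.

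Next I would compute the two means. For the geometric law $\pi\sim\textrm{Geom}(p)$ with $p=(d-1)/(d+m-1)$ and $q=1-p=m/(d+m-1)$, one has $E\pi = q/p = m/(d-1)$. For the Bose-Einstein mass function given in the statement, the quickest route is the combinatorial interpretation: $W$ is distributed as the occupancy of a distinguished cell when $m$ indistinguishable balls are placed uniformly into $d$ cells, so by symmetry $EW = m/d$. (Alternatively, one can verify this directly using the identity $j\binom{d+m-j-2}{m-j} = (m-j+1)$-type manipulations on the ratio $\mu_{j}/\mu_{j-1}$, but the symmetry argument is cleaner.) The difference is then
\begin{equation*}
|EW - E\pi| \;=\; \left|\frac{m}{d}-\frac{m}{d-1}\right| \;=\; \frac{m}{d(d-1)},
\end{equation*}
which yields the first displayed bound.

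For the second assertion, I would specialize to indicator test functions: taking $h=I_B$ for $B\subseteq\mathbb{Z}^+$ gives $\lVert\Delta h\rVert_\infty\leq 1$, so the total variation bound $d_{TV}(\mathcal{L}(W),\mathcal{L}(\pi))\leq m/d(d-1)$ follows by taking the supremum over $B$. There is no real obstacle here beyond checking that the Bose-Einstein mean is $m/d$; once that is in hand everything else is just a matter of substitution, with the cancellation of $p$ and $p^{-1}$ being the reason the final bound has such a clean form.
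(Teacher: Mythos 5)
Your proposal is correct and follows exactly the paper's route: substitute the bound $\lVert\Delta Sh\rVert_\infty\leq p^{-1}\lVert\Delta h\rVert_\infty$ into (\ref{19}) so that $p$ cancels, then evaluate $|EW-E\pi|=|m/d-m/(d-1)|=m/(d(d-1))$ using $E\pi=q/p=m/(d-1)$ and the Bose--Einstein symmetry giving $EW=m/d$, and finally take $h=I_B$ for the total variation statement. The mean computations, which the paper leaves implicit, are carried out correctly.
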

\end{example}

\begin{example}
Our next examples centre around approximation by so--called 
polynomial birth--death distributions, defined by Brown and Xia (2001) 
as the equilibrium distribution of a birth--death process
with birth and death rates $\alpha_j$ and $\beta_j$ which are polynomial in $j$.  
With such choices, we will write
$\pi \sim \mbox{PBD}(\alpha_j,\beta_j)$.

Suppose that $W$ satisfies $\mu_j=(a+bj^{-1})\mu_{j-1}$
for some $a,b\in\mathbb{R}$.  That is, $W$ belongs to the Katz 
(or Panjer) family of distributions (see Johnson \emph{et al.} (1992, Section 2.3.1)).  
It is well known that in this 
case $W$ must have either either a binomial, Poisson or negative binomial distribution.  

We fix some $l\geq1$ and consider the 
approximation of $W$ by the polynomial birth-death distribution 
$\pi \sim \mbox{PBD}(\alpha,jQ_{l-1}(j))$.  Here we have chosen a 
constant birth rate $\alpha$ and a death rate $\beta_j=jQ_{l-1}(j)$, where 
$Q_{l-1}(j)$ is a non--decreasing, monic polynomial in $j$ 
of degree $l-1$.  This gives us $l$ parameters needed to specify the 
distribution of $\pi$.  We choose these parameters in such a 
way that the condition ($A_l$) is satisfied.

With our choice of birth and death rates we have that
\begin{equation*}
 \alpha\mu_{j-1}-\beta_{j}\mu_j \;=\; 
 \alpha\mu_{j-1}-jQ_{l-1}(j)(a+bj^{-1})\mu_{j-1} \;=\; 
 \mu_{j-1}[\alpha-ajQ_{l-1}(j)-bQ_{l-1}(j)].
\end{equation*}
Noting that $\alpha-ajQ_{l-1}(j)-bQ_{l-1}(j)$ is a polynomial of 
degree $l$ in $j$, and therefore has at most $l$ real roots, we have
that the sequence $\{\alpha_{j-1}\mu_{j-1} - \beta_j\mu_j\}$ has 
at most $l$ changes of sign, so that either 
$W_\alpha\succeq_{l-cx}W_\beta$ or $W_\beta\succeq_{l-cx}W_\alpha$.

Theorem 2.10 of Brown and Xia (2001) gives us that
\begin{equation*}
\sup \lbrace \|\Delta Sh\|_\infty : h(j)=I_{(j\in B)} , 
B\subseteq{\mathbb{Z}^+} \rbrace \leq \alpha^{-1}.
\end{equation*}
Hence, with $h(j)=I_{(j\in B)}$ for some $B\subseteq\mathbb{Z}^+$,
$$
\|\Delta^lSh\|_\infty \;\leq\; 2^{l-1}\|\Delta Sh\|_\infty \;\leq\; 
2^{l-1}\alpha^{-1}.
$$
From Corollary \ref{res4} we thus obtain Corollary \ref{cor_ex2_gen}.
\begin{corollary} \label{cor_ex2_gen}
With $W$ and $\pi$ as above,
\begin{equation} \label{21}
d_{TV}(\mathcal{L}(W),\mathcal{L}(\pi)) \;\leq\; 2^{l-1}\alpha^{-1} 
\left| E\left[\alpha{W+1\choose l}-WQ_{l-1}(W){W\choose l}\right] \right|.
\end{equation}
\end{corollary}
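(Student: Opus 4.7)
The plan is to assemble the pieces already laid out in the excerpt: apply Corollary \ref{res4} with $s=l$ to the birth-death rates $\alpha_j=\alpha$, $\beta_j=jQ_{l-1}(j)$, and then bound the Stein factor $\|\Delta^l Sh\|_\infty$ for indicator test functions via the Brown--Xia estimate already quoted in the preceding paragraph.

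First I would verify the hypotheses of Corollary \ref{res4}. The equality $E\alpha_W=E\beta_W$ (i.e.\ $\alpha=\beta$ in the notation of Section \ref{bd2}) is exactly the $t=0$ instance of condition $(A_l)$, which the parameters of $\pi$ have been chosen to satisfy. The $l$-convex ordering between $W_\alpha$ and $W_\beta$ (in one direction or the other) is handed to us by Proposition \ref{prk_n} via the Karlin--Novikoff argument: as computed in the excerpt, $\alpha\mu_{j-1}-\beta_j\mu_j=\mu_{j-1}[\alpha-ajQ_{l-1}(j)-bQ_{l-1}(j)]$, and the bracketed expression is a polynomial of degree $l$ in $j$, hence has at most $l$ real roots and the sequence has at most $l$ sign changes. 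Plugging $\alpha_W=\alpha$ and $\beta_W=WQ_{l-1}(W)$ into the bound (\ref{12}) of Corollary \ref{res4} yields
\begin{equation*}
|Eh(W)-Eh(\pi)| \;\leq\; \|\Delta^l Sh\|_\infty \, \left| E\left[\alpha C^l_{W+1} - WQ_{l-1}(W)\,C^l_W\right] \right|.
\end{equation*}

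Next, I would control $\|\Delta^l Sh\|_\infty$ for $h(j)=I_{(j\in B)}$. Iterating the trivial bound $\|\Delta g\|_\infty \leq 2\|g\|_\infty$ (valid for any bounded $g$), applied $l-1$ times to $g=\Delta f$, gives $\|\Delta^l f\|_\infty \leq 2^{l-1}\|\Delta f\|_\infty$. Combined with the Brown--Xia estimate $\|\Delta Sh\|_\infty\leq\alpha^{-1}$ stated just before the corollary, this produces $\|\Delta^l Sh\|_\infty \leq 2^{l-1}\alpha^{-1}$. Substituting and taking the supremum over $B\subseteq\mathbb{Z}^+$ converts the bound on $|Eh(W)-Eh(\pi)|$ into the total variation bound (\ref{21}).

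Honestly there is no serious obstacle here — the nontrivial analytic content (the ordering via Karlin--Novikoff, the Stein-factor estimate, the abstract bound in Corollary \ref{res4}) has all been done earlier. The only routine step worth checking carefully is the iterated difference inequality $\|\Delta^l f\|_\infty \leq 2^{l-1}\|\Delta f\|_\infty$, which follows at once from $\Delta^l f = \Delta^{l-1}(\Delta f)$ and the elementary bound above.
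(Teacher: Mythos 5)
Your proposal is correct and follows essentially the same route as the paper: the Karlin--Novikoff sign-change argument (via Proposition \ref{prk_n}) to get the $l$-convex ordering under condition $(A_l)$, the bound (\ref{12}) from Corollary \ref{res4}, and the Stein-factor estimate $\|\Delta^l Sh\|_\infty\leq 2^{l-1}\|\Delta Sh\|_\infty\leq 2^{l-1}\alpha^{-1}$ from the Brown--Xia bound. No gaps.
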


For example, consider the case where $W\sim\mbox{Bin}(n,p)$ and 
$\pi\sim\mbox{PBD}(\alpha, \gamma j+j(j-1))$, so that $l=2$.
Choosing our constants $\alpha$ and $\gamma$ according to 
the prescription above, straightforward calculations give us that
$$
\alpha=n(n-1)p(1-p), \hspace{12pt}\mbox{ and }\hspace{12pt} \gamma=(n-1)(1-2p).
$$
Furthermore,
\begin{multline*}
E[W(W+1)]=np(np+2-p),\hspace{15pt}E[W^2(W-1)]=n(n-1)p^2(np+2-2p),\hspace{15pt}\mbox{and}\\
E[W^2(W-1)^2]=n(n-1)p^2(n^2p^2+4np-5np^2-8p+6p^2+2).
\end{multline*}
Evaluating the bound (\ref{21}) then gives
\begin{corollary} \label{cor_ex2} Assume that 
$W\sim\mbox{Bin}(n,p)$ and 
$\pi\sim\mbox{PBD}(\alpha, \gamma j+j(j-1))$. Then, 
\begin{equation}  \label{22} 
d_{TV}(\mathcal{L}(W),\mathcal{L}(\pi)) \leq 2p^2.
\end{equation}
\end{corollary}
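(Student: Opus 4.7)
The plan is to specialize the bound (\ref{21}) from Corollary \ref{cor_ex2_gen} to $l=2$ and reduce everything to a single polynomial identity in $n$ and $p$. Since $\beta_j = \gamma j + j(j-1) = j(j+\gamma-1)$, the associated monic polynomial is $Q_1(j)=j+\gamma-1$, and (\ref{21}) reads
$$
d_{TV}(\mathcal{L}(W),\mathcal{L}(\pi)) \;\leq\; 2\alpha^{-1}\left|E\left[\alpha\binom{W+1}{2} - W(W+\gamma-1)\binom{W}{2}\right]\right|.
$$
I would first note that the stated choices $\alpha=n(n-1)p(1-p)$ and $\gamma=(n-1)(1-2p)$ are precisely those forced by condition $(A_2)$, namely $E[\alpha_W]=E[\beta_W]$ and $E[\alpha_W(W+1)]=E[\beta_W W]$; these amount to two linear equations in $\alpha$ and $\gamma$, solvable using $E[W]=np$, $E[W^2]$ and $E[W^2(W-1)]$.

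Next I would clear the binomial coefficients using $\binom{W+1}{2}=W(W+1)/2$, $\binom{W}{2}=W(W-1)/2$, and the factorization $(W+\gamma-1)(W-1)=(W-1)^2+\gamma(W-1)$. This rewrites the quantity inside the absolute value as
$$
\tfrac{1}{2}\bigl(\alpha\,E[W(W+1)] \;-\; E[W^2(W-1)^2] \;-\; \gamma\,E[W^2(W-1)]\bigr),
$$
into which I would substitute the three moment formulas displayed just before the corollary, together with the values of $\alpha$ and $\gamma$.

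The main obstacle is the algebraic bookkeeping. After factoring out the common $n(n-1)p^2$, I expect the coefficients of $n^2p^2$, $n^2p$, $np^2$, $np$, $n$, and the constant to cancel pairwise — these cancellations being exactly those forced by condition $(A_2)$ — and only the lower order terms $-2p^2+2p = 2p(1-p)$ to survive. Restoring the prefactor then yields $2n(n-1)p^3(1-p) = 2\alpha p^2$ for the bracket, so the quantity inside the absolute value equals $\alpha p^2$, and multiplication by $2\alpha^{-1}$ produces the stated bound $d_{TV}(\mathcal{L}(W),\mathcal{L}(\pi))\leq 2p^2$. The appearance of the clean factor $2p^2$ (independent of $n$) is in fact the signature of the $(A_2)$-matching: only the discrepancy in the third factorial moment, of order $p^3$ relative to $\alpha = \Theta(n^2 p)$, remains.
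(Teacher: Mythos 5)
Your proposal is correct and follows exactly the paper's route: specialize the bound (\ref{21}) to $l=2$, determine $\alpha$ and $\gamma$ from the $(A_2)$ moment-matching conditions, and substitute the three displayed binomial moments; the algebra indeed collapses to $\alpha p^2$ inside the absolute value, giving $2p^2$. The paper simply states ``evaluating the bound (\ref{21}) then gives'' the result, so your computation supplies the same details the authors left implicit.
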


We note that (\ref{21}) does not necessarily give a bound of 
the optimal order.  In the case covered by (\ref{22}), Theorem 3.1 of
Brown and Xia (2001) gives a bound on total variation distance 
of order $O(p^2/\sqrt{\lambda})$, where $\lambda=E[W]=np$.  This 
disparity is due to our rather crude use of the supremum norm in 
obtaining bounds such as (\ref{21}).  In Sections
\ref{tp} and \ref{bd3}, we will consider more refined ways to bound 
the terms of our Stein equation in some particular 
cases when we have two parameters to choose in our approximating 
distribution $\pi$. Despite this disadvantage, we nevertheless 
note that (\ref{21}) gives an explicit bound which may be applied in many contexts.  
\end{example}

\begin{example}
Our final example of this section focuses on mixture 
distributions of the polynomial birth--death type.  Suppose that
$\pi\sim\mbox{PBD}(\alpha,\beta_j)$ and $W\sim\mbox{PBD}(\xi,\beta_j)$, 
for some constant birth rate $\alpha$, polynomial death
rate $\beta_j$ and random variable $\xi$ on $\mathbb{R}^+$.  In this 
case we have that
\begin{equation} \label{23}
\mu_j = \frac{E\left[\mu_0(\xi)\xi^j\right]}{\prod_{k=1}^j\beta_k}, 
\quad j\geq 0.
\end{equation}
We choose $\alpha$ such that $\alpha=E\beta_W$, that is,
$$
\alpha \;=\; E\sum_{j=0}^\infty\beta_j\mu_j \;=\; E\sum_{j=0}^\infty\xi\mu_{j-1} \;=\; E\xi.
$$
Using (\ref{23}), we obtain
\begin{eqnarray*}
\alpha\mu_{j}-\beta_{j+1}\mu_{j+1} &=& E\left[\frac{\mu_0(\xi)\alpha^{j+1}}{\prod_{k=1}^j\beta_j}\left\{\left(\frac{\xi}{\alpha}\right)^j-\left(\frac{\xi}{\alpha}\right)^{j+1}\right\}\right]\\
&=&E\left[\frac{\alpha\mu_0(\xi)}{\mu_0(\alpha)}
\left(1-\frac{\xi}{\alpha}\right)\pi_j\left(\frac{\xi}{\alpha}\right)^j\right].
\end{eqnarray*}
From this, we can see that the sequence $\{\alpha\mu_{j}-\beta_{j+1}\mu_{j+1}\}$ is monotone. 
Hence, Corollary \ref{res2} gives us the following.
\begin{corollary} \label{cor_ex3} 
With $W$ and $\pi$ as above, 
\begin{equation} \label{24}
|Eh(W)-Eh(\pi)| \;\leq\; \lVert\Delta Sh\rVert_\infty
\left|E[\alpha (W+1) - \beta_W W]\right|.
\end{equation}   
\end{corollary}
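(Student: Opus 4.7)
The plan is to recognise (\ref{24}) as a direct consequence of Corollary \ref{res2}. First I would verify the equal-means condition $E\alpha_W=E\beta_W$: since $\alpha_j\equiv\alpha$ is constant, $E\alpha_W=\alpha$, and the identity $\alpha=E\xi=E\beta_W$ is exactly the one established in the display just above the statement, so this hypothesis holds automatically.

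Second, I would justify the monotonicity of the sequence $\{\alpha\mu_j-\beta_{j+1}\mu_{j+1}\}$, which is the remaining hypothesis of Corollary \ref{res2}. The needed simplification via (\ref{23}) has already been carried out in the display preceding the statement, giving
\[
\alpha\mu_j-\beta_{j+1}\mu_{j+1}=\pi_j\,E\!\left[\frac{\alpha\mu_0(\xi)}{\mu_0(\alpha)}\Bigl(1-\frac{\xi}{\alpha}\Bigr)\Bigl(\frac{\xi}{\alpha}\Bigr)^{\!j}\right].
\]
For each fixed $\xi$, the factor $(1-\xi/\alpha)(\xi/\alpha)^j$ is monotone decreasing in $j$: on $\{\xi<\alpha\}$ it is non-negative and decreases to $0$, on $\{\xi>\alpha\}$ it is negative while $(\xi/\alpha)^j$ increases without bound, and at $\xi=\alpha$ it vanishes. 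Taking expectations preserves this monotonicity, and multiplication by the positive quantity $\pi_j$ does not disturb the resulting sign pattern, which is what Corollary \ref{res2} exploits.

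With both hypotheses verified, Corollary \ref{res2} supplies the stochastic ordering between $W_\alpha$ and $W_\beta$, and hence Proposition \ref{res1} delivers the bound (\ref{6}). Specialising to $\alpha_W\equiv\alpha$ yields $E[\alpha_W(W+1)]=\alpha\,E(W+1)$, so (\ref{6}) reduces precisely to (\ref{24}). The main subtle point is the interplay between the $j$-dependent prefactor $\pi_j$ sitting outside the expectation and the monotonicity claim for the whole sequence; here one has to argue carefully that the sign structure of the integrand (positive throughout, negative throughout, or changing sign at most once) is the feature of the sequence on which Corollary \ref{res2} ultimately relies.
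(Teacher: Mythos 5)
Your proof is correct and follows the same route as the paper: choose $\alpha=E\xi$ so that $E\alpha_W=E\beta_W$, derive the representation of $\alpha\mu_j-\beta_{j+1}\mu_{j+1}$ from (\ref{23}), and invoke Corollary \ref{res2} to get (\ref{6}), which reduces to (\ref{24}) since $\alpha_W\equiv\alpha$. Your closing caveat is in fact slightly more careful than the paper's own wording, which asserts monotonicity of the full sequence even though the positive prefactor $\pi_j$ only preserves the at-most-one-sign-change property --- precisely the feature that the Karlin--Novikoff argument behind Corollary \ref{res2} requires.
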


For example, if $\beta_j=j$ then $W\sim\textrm{Po}(\xi)$ and we take 
$\pi\sim\textrm{Po}(\lambda)$, where $\lambda=E\xi$.
Using the well--known bound on the Stein operator $S$ in this case, namely
\begin{equation} \label{25}
\|\Delta Sh\|_\infty \leq \; \lambda^{-1} (1-e^{-\lambda}) \; \|h\|_\infty,
\end{equation}
evaluating (\ref{24}) gives, after some straightforward calculation,
$$
d_{TV}(\mathcal{L}(W),\mbox{Po}(\lambda)) \;\leq\; \lambda^{-1} (1-e^{-\lambda})\mbox{Var}(\xi), 
$$
a bound that has also been obtained by Barbour \emph{et al.} (1992, Theorem 1.C).
\end{example}

\section{Poisson approximation for a sum of indicators} \label{pois}

Throughout this section, the random variable $W$ of interest is a sum of indicators:
$$  
W=X_1+\cdots+X_n,   
$$ 
where the $X_i$ are Bernoulli variables, possibly dependent, with 
$$
p_i=P(X_i=1)=1-P(X_i=0), \quad 1\leq i \leq n.
$$ 
Using Propositions \ref{res1} and \ref{res3}, we are going 
to investigate the approximation of the sum $W$ by a Poisson 
random variable $\pi \sim \textrm{Po}(\lambda)$. 

Recall that our Poisson variable is derived from (\ref{1}) when 
$\alpha_j=\lambda$ and $\beta_j=j$, so that by (\ref{7}),
\begin{equation}\label{26} 
W_{\alpha}=W+1, \; \mbox { and } \; P(W_{\beta} \in B)=\frac{E[WI_{(W\in B)}]}{EW},
\end{equation}
for any Borel set $B$. In the analysis, an important role will be 
played by the variables 
\begin{equation*}
W_i=W-X_i, \quad 1\leq i \leq n.   
\end{equation*}

\subsection{Total dependence} \label{total}

Firstly, we consider the case where the indicators $X_i$ are 
totally negatively dependent 
in the sense of Papadatos and Papathanasiou (2002). Let us recall that 
$n$ random variables $X_i,\; 1\leq i \leq n$, are totally negatively
dependent (TND) if
\begin{equation}\label{28} 
\textrm{Cov}[g_1(X_i),g_2(W_i)]\leq 0, \quad 1\leq i \leq n,
\end{equation}
for all non-decreasing functions $g_1$, $g_2$ such that the covariance exists.

Papadatos and Papathanasiou (2002, Theorem 3.1) show that the class of
TND indicators includes the standard class of negatively related indicators.
Stein's method for Poisson approximation of a sum of negatively related indicators is discussed 
by, for example, Barbour \emph{et al.} (1992) and Erhardsson (2005).
Recall that indicator random variables $X_1,\ldots,X_n$ are said to be negatively related if
\begin{multline} \label{29}
E[g(X_1,\ldots,X_{i-1},X_{i+1},\ldots,X_n)|X_i=1] \;\leq\;
E[g(X_1,\ldots,X_{i-1},X_{i+1},\ldots,X_n)],\\ 1\leq i \leq n,
\end{multline}
for all non-decreasing functions $g:\lbrace0,1\rbrace^{n-1} \mapsto \lbrace0,1\rbrace$. 

We wish to bound the total variation distance between $\mathcal{L}(W)$ and $\textrm{Po}(\lambda)$. 
For that, we will apply Proposition \ref{res1}. By (\ref{26}), we have that, for any function $g:\mathbb{Z}^+\mapsto\mathbb{R}$, 
$$
Eg(W_\alpha)=Eg(W+1), \; \mbox { and  } \; Eg(W_\beta)=\frac{E[Wg(W)]}{EW}.
$$
Thus, to show that $W_\alpha \succeq_{st} W_\beta$, we must prove that if $g$ is 
non-decreasing, then $EW Eg(W+1)\geq E[Wg(W)]$. In fact, this was 
established by Papadatos and Papathanasiou (2002, Lemma 3.1). 

Using the bound (\ref{25}) on the Stein operator in the Poisson case, 
(\ref{5}) and (\ref{6}) provide the following result.

\begin{theorem} \label{res6}
If the indicators $\{X_i: 1\leq i\leq n\}$ are TND, then $W_\alpha \succeq_{st}W_\beta$. 
If, in addition, $EW\geq\lambda$, then
\begin{equation*}
d_{TV}(\mathcal{L}(W),\textrm{Po}(\lambda)) \leq \frac{1-e^{-\lambda}}{\lambda}\;
([\lambda +1]EW - E[W^2]).
\end{equation*}
\end{theorem}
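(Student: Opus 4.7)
The strategy is to apply Proposition \ref{res1} to $W$ in the Poisson setting where $\alpha_j = \lambda$ and $\beta_j = j$; by (\ref{26}) this identifies $W_\alpha = W+1$ and $W_\beta$ as the size-biased version of $W$. The Poisson Stein bound (\ref{25}) will then convert the resulting moment estimate into the announced total variation bound.

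The decisive step is verifying the stochastic order $W_\alpha \succeq_{st} W_\beta$ from the TND hypothesis. Via (\ref{26}), this is equivalent to establishing $EW \cdot E[g(W+1)] \geq E[W g(W)]$ for every non-decreasing $g : \mathbb{Z}^+ \to \mathbb{R}$. The plan here is to exploit the identity $X_i g(W) = X_i g(W_i + 1)$, valid since $X_i \in \{0, 1\}$, and decompose $E[W g(W)] = \sum_i E[X_i g(W_i + 1)]$. Applying the TND covariance inequality (\ref{28}) to the pair $(X_i, g(W_i + 1))$, which is non-decreasing in each coordinate, gives $E[X_i g(W_i + 1)] \leq p_i E[g(W_i + 1)]$; the monotonicity of $g$ combined with $W_i + 1 \leq W + 1$ then yields $E[g(W_i + 1)] \leq E[g(W + 1)]$. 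Summing over $i$ and using $\sum_i p_i = EW$ delivers the required inequality. This argument mirrors Papadatos and Papathanasiou (2002, Lemma 3.1).

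With the stochastic order in hand, Proposition \ref{res1}, together with the moment hypothesis $EW \geq \lambda$, produces $|E h(W) - E h(\pi)| \leq \|\Delta S h\|_\infty \cdot [(\lambda + 1) EW - E[W^2]]$ after substitution of the Poisson rates $\alpha_W = \lambda$, $\beta_W = W$ and routine manipulation of the resulting moment expression. Specializing to $h(j) = I_{(j \in B)}$ and invoking (\ref{25}) to obtain $\|\Delta S h\|_\infty \leq \lambda^{-1}(1 - e^{-\lambda})$, then passing to the supremum over $B \subseteq \mathbb{Z}^+$, delivers the stated total variation bound. The main obstacle is the covariance step in the ordering argument: one must set up the decomposition so that the pair fed into (\ref{28}) is genuinely monotone in both arguments and then correctly chain this with the auxiliary inequality $E[g(W_i + 1)] \leq E[g(W + 1)]$. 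Once this is done, the remaining passages amount to direct substitution into the abstract framework developed in Section \ref{sect2}.
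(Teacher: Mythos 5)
Your proposal is correct and follows the paper's proof essentially verbatim: reduce the ordering $W_\alpha\succeq_{st}W_\beta$ to the inequality $EW\,E[g(W+1)]\geq E[Wg(W)]$ for non-decreasing $g$, then feed this ordering into Proposition \ref{res1} with the Poisson Stein factor (\ref{25}). The only difference is that you prove that key inequality inline (via $E[Wg(W)]=\sum_i E[X_i g(W_i+1)]$, the TND covariance bound applied to the pair $(X_i, g(W_i+1))$, and the monotone comparison $W_i+1\leq W+1$), whereas the paper simply cites Papadatos and Papathanasiou (2002, Lemma 3.1) for it; your argument is exactly the proof of that lemma.
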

Further results on, and examples of, TND indicator random variables 
can be found in Papadatos and Papathanasiou (2002). 

Let us now consider the case where the indicators $X_i$ are positively dependent 
in a certain sense. We adapt the definition (\ref{28}) and say that $n$ random 
variables $X_1\ldots,X_n$, are totally positively dependent (TPD) if
\begin{equation*}
\textrm{Cov}[g_1(X_i),g_2(W_i)]\geq 0, \quad 1\leq i \leq n,
\end{equation*}
for all non-decreasing functions $g_1$, $g_2$ such that the covariance exists. 

Association or positive relation is sufficient for TPD.  
This can be established analogously to 
the proof of Theorem 3.1 of Papadatos and Papathanasiou (2002). 
Recall that our indicator random variables are
said to be positively related if (\ref{29}) holds with the 
inequality reversed for all
non-decreasing functions $g:\lbrace0,1\rbrace^{n-1} \mapsto \lbrace0,1\rbrace$.
This standard property is used with Stein's method by, for example, 
Barbour \emph{et al.} (1992) and Erhardsson (2005).

In the sequel, it is assumed that $EW=\lambda$. To get a bound for 
the total variation 
distance, we will apply Proposition \ref{res3}, using the lemma 
stated below. To begin with, 
we introduce a random variable $X_V$, a mixing of our $n$ 
indicators, in which 
the index $V$ is a random variable of law
\begin{equation}\label{32} 
P(V=i)=\frac{EX_i}{\lambda}, \quad 1\leq i \leq n.
\end{equation}

\begin{lemma} \label{res7}
If $EW=\lambda$ and the indicators $\{X_i : 1\leq i\leq n\}$ are TPD, then 
\begin{equation} \label{33} 
W_\beta\succeq_{st}W_\alpha -X_V,
\end{equation}
where $W_\alpha -X_V\geq 0$ a.s. 
\end{lemma}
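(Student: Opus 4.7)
The plan is to verify the two assertions separately, with the non-negativity statement essentially immediate and the stochastic ordering reducing to a sum of TPD covariances.

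\smallskip

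First, I would observe that $W_\alpha - X_V = W + 1 - X_V \geq W \geq 0$ a.s., since $X_V \in \{0,1\}$ by construction. So only the ordering $W_\beta \succeq_{st} W + 1 - X_V$ requires work. To establish it, I would pick an arbitrary non-decreasing function $g$ and aim to show $E[g(W_\beta)] \geq E[g(W+1-X_V)]$.

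\smallskip

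For the left-hand side, I would use the size-biased representation in (\ref{26}) and the decomposition $W = X_i + W_i$:
\begin{equation*}
E[g(W_\beta)] \;=\; \frac{E[W g(W)]}{\lambda} \;=\; \frac{1}{\lambda}\sum_{i=1}^n E[X_i\, g(X_i + W_i)] \;=\; \frac{1}{\lambda}\sum_{i=1}^n E[X_i\, g(W_i + 1)],
\end{equation*}
where the last step uses that $X_i g(X_i+W_i)$ vanishes unless $X_i=1$. For the right-hand side, taking $V$ independent of $(X_1,\ldots,X_n)$ with the law in (\ref{32}) and using $W + 1 - X_i = W_i + 1$ a.s., I would obtain
\begin{equation*}
E[g(W+1-X_V)] \;=\; \sum_{i=1}^n \frac{EX_i}{\lambda}\, E[g(W_i+1)].
\end{equation*}

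\smallskip

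Subtracting, the difference is
\begin{equation*}
E[g(W_\beta)] - E[g(W+1-X_V)] \;=\; \frac{1}{\lambda}\sum_{i=1}^n \operatorname{Cov}[X_i,\, g(W_i+1)].
\end{equation*}
Since $g$ is non-decreasing, so is the map $w \mapsto g(w+1)$, and the identity $x \mapsto x$ on $\{0,1\}$ is trivially non-decreasing; hence each covariance is non-negative by the TPD assumption. Summing over $i$ gives the desired inequality. No real obstacle arises; the only point worth being careful about is recognising that $X_i g(W) = X_i g(W_i+1)$ a.s., which is what converts the size-biased expression into a form in which the TPD hypothesis can be applied coordinatewise.
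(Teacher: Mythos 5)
Your proof is correct, and it reaches the conclusion by a somewhat more direct route than the paper. The paper first invokes the size-biased representation $W_\beta=\sum_{i\neq V}\hat{X}_i+1$ with $\hat{X}_i=_d(X_i\mid X_v=1)$, reduces the claim to the conditional orderings $\sum_{i\neq v}\hat{X}_i\succeq_{st}W-X_v$ for each fixed $v$ (citing Shaked and Shanthikumar for the fact that stochastic order is preserved under mixing over a common index), and then verifies each conditional ordering by comparing the tail probabilities $P(W_v>a\mid X_v=1)$ and $P(W_v>a)$ via TPD. You instead expand $\lambda E[g(W_\beta)]=E[Wg(W)]=\sum_i E[X_i g(W_i+1)]$ directly from the definition (\ref{26}), compare it with $\lambda E[g(W_\alpha-X_V)]=\sum_i EX_i\,E[g(W_i+1)]$, and identify the difference as $\sum_i \textrm{Cov}[X_i,g(W_i+1)]\geq 0$ by TPD. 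The essential input is identical in both arguments --- TPD applied to the pair $(X_i,W_i)$ with $g_1$ the identity; indeed the paper's tail-probability inequality is exactly your covariance inequality for $g_2=I_{(\cdot>a)}$ divided by $P(X_i=1)$ --- but your version bypasses both the explicit size-biasing construction and the mixture lemma, and is closer in style to the paper's own proof of Lemma \ref{res9}. The only point worth a word of care is that for $\succeq_{st}$ one should take $g$ bounded non-decreasing (or an indicator of an upper set) so that the covariances exist; since $W\leq n$ here, this is automatic.
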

\begin{proof}
As seen in (\ref{26}), $W_\alpha = W+1$ and thus, 
$W_\alpha -X_V\geq 0$ a.s. Moreover, $W_\beta$ has the so-called 
$W$-size-biased distribution: see, for example, Goldstein and 
Rinott (1996). $W$ being a sum of indicators, 
it is then known that $W_\beta$ admits the representation 
\begin{equation}\label{34}
W_\beta=\sum_{i\not=V}\hat{X}_i+1,
\end{equation}
where $V$ is a random variable of law (\ref{32}), and if $V=v$,
\begin{equation*} 
\hat{X}_i =_d (X_i|X_v=1), \quad i\neq v.
\end{equation*}
Thus, by (\ref{34}), the ordering (\ref{33}) is equivalent to  
$\sum_{i\not=V}\hat{X}_i \succeq_{st}W-X_V$. To establish this, it is 
enough to prove that
\begin{equation*}
\sum_{i\not=v}\hat{X}_i \succeq_{st}W-X_v, \quad 1\leq v\leq n;
\end{equation*}
see Shaked and Shanthikumar (2007). Now, by (\ref{34}) and the TPD assumption, 
we get, for any real $a \geq 0$,
\begin{eqnarray*}
P(\sum_{i\not=v}\hat{X}_i>a)
&=& P(\sum_{i\not=v}X_i>a|X_v=1)\\
&\geq& P(\sum_{i\not=v}X_i>a) \;=\; P(W-X_v>a),
\end{eqnarray*}
which is the desired result.
\end{proof}

Thanks to Lemma \ref{res7}, we may apply Proposition \ref{res3} with $s=p=1$. Noting that by 
(\ref{32}), 
\begin{equation*}
EX_V \;=\; \sum_{i=1}^n p_i\,P(V=i) \;=\; \frac{1}{\lambda}\,\sum_{i=1}^n p_i^2 ,
\end{equation*}
we then get the following result.

\begin{theorem} \label{res8}
If $EW=\lambda$ and the indicators $\{X_i : 1\leq i\leq n\}$ are TPD, then 
\begin{equation*}
d_{TV}(\mathcal{L}(W),\textrm{Po}(\lambda)) \leq \frac{1-e^{-\lambda}}{\lambda}\;
\left\{E[W^2] +2\sum_{i=1}^np_i^2 - \lambda(\lambda +1)\right\}.
\end{equation*}
\end{theorem}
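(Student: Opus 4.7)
The plan is to replay the argument of Proposition \ref{res3} in the special case $s=p=1$, but with the roles of $W_\alpha$ and $W_\beta$ swapped so as to align with the ordering direction furnished by Lemma \ref{res7}.

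In the Poisson setting $\alpha_j=\lambda$ and $\beta_j=j$, so $\alpha=\beta=\lambda$; (\ref{26}) gives $W_\alpha=W+1$ and $W_\beta$ is the size-biased version of $W$. Starting from $E[Af(W)]=\lambda E[f(W_\alpha)]-\lambda E[f(W_\beta)]$ and expanding $f$ via the discrete Taylor formula with $s=1$, the $t=0$ contribution cancels (because $\alpha=\beta$), leaving
\[
E[Af(W)] \;=\; \lambda\sum_{k=0}^\infty \Delta f(k)\bigl[P(W_\alpha>k)-P(W_\beta>k)\bigr].
\]
Taking absolute values, bounding $|\Delta f(k)|\leq\|\Delta f\|_\infty$, and applying Lemma \ref{res5} with $X=W_\beta$, $Z=W_\alpha$, $Y=X_V$, $a=b=1$ and $s=p=1$ --- whose hypotheses are exactly the content of Lemma \ref{res7} --- the right-hand side collapses to $EW_\beta-EW_\alpha+2EX_V$, the $|ap-b|$ contribution vanishing since $a=b$.

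The required moments are immediate: $EW_\alpha=\lambda+1$, $EW_\beta=E[W^2]/\lambda$ by size-biasing, and $EX_V=\lambda^{-1}\sum_{i=1}^{n}p_i^2$ as already recorded above the statement. Multiplying through by $\lambda$ turns the bound into $E[W^2]+2\sum_{i=1}^{n}p_i^2-\lambda(\lambda+1)$. Specializing $h=I_B$, invoking the classical Poisson Stein factor (\ref{25}), $\|\Delta Sh\|_\infty\leq\lambda^{-1}(1-e^{-\lambda})$, and taking the supremum over $B\subseteq\mathbb{Z}^+$ produces the claimed total-variation inequality.

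The only minor subtlety --- and the only place where one must tread carefully --- is that Proposition \ref{res3} as stated assumes $W_\alpha\succeq_{s-cx} v_p(W_\beta-Y)$, whereas Lemma \ref{res7} supplies the reverse ordering $W_\beta\succeq_{st} W_\alpha-X_V$ with $W_\alpha-X_V\geq 0$ a.s. This is not a genuine obstacle: the left-hand side of Lemma \ref{res5} is symmetric under the swap $(X,a)\leftrightarrow(Z,b)$ because it is an absolute value, so after relabelling one simply reads off the symmetric version of Proposition \ref{res3} with $Y=X_V$ sitting on the $\alpha$-side, and no other step of the argument is affected.
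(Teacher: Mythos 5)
Your argument is correct and is essentially the paper's own proof: the paper likewise obtains Theorem \ref{res8} by applying Proposition \ref{res3} (equivalently Lemma \ref{res5}) with $s=p=1$ and $Y=X_V$ via Lemma \ref{res7}, then inserting $EW_\beta=E[W^2]/\lambda$, $EW_\alpha=\lambda+1$, $EX_V=\lambda^{-1}\sum_{i=1}^n p_i^2$ and the Stein factor (\ref{25}). Your explicit treatment of the $\alpha$/$\beta$ role swap is a detail the paper leaves implicit, but it is precisely the symmetry of the absolute value in Lemma \ref{res5} that the paper is relying on.
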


This bound is obtained (and applied) by Barbour \emph{et al.} (1992, Corollary 2.C.4) under the condition of 
positive relation. See also Erhardsson (2005).

\subsection{Local dependence} \label{local}

Our goal in this part is to combine the previous $s$-convex ordering approach 
with a more flexible property of dependence. More precisely, we first introduce 
a concept of local dependence between a set of $n$ indicators $X_1,\ldots,X_n$. 

Let $\mathcal{F}_s $ be the class of all functions $g: \lbrace0,1\rbrace^{n-1} 
\mapsto \mathbb{R^+}$ that are non-decreasing and $s$-convex with $g(0)=0$. 
We say that the $n$ indicators $X_1\ldots,X_n,$ are 
$(s,{\bf \delta})${\it-locally negatively dependent} 
($(s,{\bf \delta})$-LND) if there exist $n$ non-negative reals 
$\delta_1\ldots,\delta_n$ (of sum $>0$) such that 
\begin{equation}\label{37}
E[X_ig(W_i)] \;\leq\; \delta_i\, E[g(W_i)] \; \mbox { for all functions } g\in \mathcal{F}_s, 
\quad 1\leq i \leq n.
\end{equation}
Similarly, $X_1\ldots,X_n,$ are said to be $(s,{\bf \delta})$-locally 
positively dependent ($(s,{\bf \delta})$-LPD) if
\begin{equation}\label{38}
E[X_ig(W_i)] \;\geq\; \delta_i \, E[g(W_i)] \; \mbox { for all functions } g\in \mathcal{F}_s, 
\quad 1\leq i \leq n.
\end{equation}

Let $\delta:=\delta_1+\ldots+ \delta_n$, and denote
\begin{equation*}
v_i= \delta^{-1}\delta_i, \quad 1\leq i \leq n, \; 
\mbox{ and }\; p=EW/\delta \wedge \delta/EW.
\end{equation*}
We then adopt the notation $v_p$ and $X_V$ of Sections \ref{intro} and \ref{total}.

\begin{lemma} \label{res9}
If the indicators $\{X_i : 1\leq i\leq n\}$ are $(s,{\bf \delta})$-LND, then
\begin{equation}\label{40}
W_{\alpha} \succeq_{s-cx} v_p W_{\beta},
\end{equation}
while if the indicators $\{X_i : 1 \leq i\leq n\}$ are $(s,{\bf \delta})$-LPD, then 
\begin{equation}\label{41}
W_{\beta} \succeq_{s-cx} v_p (W_{\alpha} -X_V).
\end{equation}
\end{lemma}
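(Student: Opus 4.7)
The plan is to reduce both $s$-convex orderings (\ref{40}) and (\ref{41}) to a scalar inequality on the generating family $w_k(x) := C_{x-k-1}^{s-1}$, $k\geq 0$, of $\mathcal{F}_s$. Each $w_k$ is non-decreasing, $s$-convex and vanishes at $0$, so $w_k\in\mathcal{F}_s$; since $w_k(0)=0$, the thinning identity $E[w_k(v_p Y)]=p\,E[w_k(Y)]$ reduces $X\succeq_{s-cx} v_p Y$ to checking $E[w_k(X)]\geq p\,E[w_k(Y)]$ for every $k\geq 0$, exactly the reduction already used in the proof of Lemma \ref{res5}.

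The key identity is the size-biased representation $\lambda\,E[g(W_\beta)] = E[Wg(W)] = \sum_i E[X_i g(W_i+1)]$, which holds because $X_i\in\{0,1\}$ forces $X_i g(W) = X_i g(W_i+1)$. Applied with $g=w_k$, it rewrites $E[w_k(W_\beta)] = \lambda^{-1}\sum_i E[X_i\tilde w_k(W_i)]$, where $\tilde w_k(w):=w_k(w+1)=C_{w-k}^{s-1}$. A direct check shows $\tilde w_k$ is non-decreasing and $s$-convex, and that $\tilde w_k(0)=0$ precisely when $k\geq 1$ or $s\geq 2$; the remaining corner $(s,k)=(1,0)$ collapses to $1\geq p\,P(W_\beta\geq 1)$ (or a symmetric statement in part (\ref{41})), which is trivial.

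For part (\ref{40}), I would apply the LND inequality (\ref{37}) to $\tilde w_k$ and sum over $i$:
$$
\lambda\,E[w_k(W_\beta)] \;\leq\; \sum_i \delta_i\,E[\tilde w_k(W_i)] \;=\; \delta\,E[w_k(W_{V^\ast}+1)],
$$
where $V^\ast$ is the auxiliary index with law $v_i$. Since $w_k$ is non-decreasing and $W_{V^\ast}\leq W$ pointwise, the right-hand side is at most $\delta\,E[w_k(W_\alpha)]$. Combining this with $p\leq\lambda/\delta$ yields $p\,E[w_k(W_\beta)]\leq E[w_k(W_\alpha)]$, which is (\ref{40}). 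For part (\ref{41}) the same computation with (\ref{38}) in place of (\ref{37}) reverses the direction, producing $\lambda\,E[w_k(W_\beta)]\geq \delta\,E[w_k(W_\alpha-X_{V^\ast})]$, and $p\leq\delta/\lambda$ then delivers (\ref{41}), with the mixture $X_V$ in the statement read via the $v_i$-weighted index $V^\ast$.

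The main obstacle is the bookkeeping around $p = (EW/\delta)\wedge(\delta/EW)$ and the shift from $w_k$ to $\tilde w_k$: one must ensure $\tilde w_k(0)=0$ in order to legitimately invoke the $\mathcal{F}_s$-inequalities (\ref{37})/(\ref{38}), and verify that the prefactor $\delta/\lambda$ (or $\lambda/\delta$) emerging from summation against the $\delta_i$ is always bounded by whichever of $p$ or $p^{-1}$ is needed, independently of which branch of the minimum defining $p$ is active.
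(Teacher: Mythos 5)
Your proof is correct and follows essentially the same route as the paper's: the size-bias identity $E[Wg(W)]=\sum_i E[X_i g(W_i+1)]$, termwise application of the LND/LPD hypothesis, monotonicity to pass from $W_i+1$ to $W+1$ (or to introduce the mixed index), and the bound $p\leq(EW/\delta)\wedge(\delta/EW)$ together with $g(0)=0$ to absorb the $v_p$-thinning. The only differences are that you specialise to the generating family $w_k$ (which is all that is used downstream via Lemma \ref{res5}) and that you explicitly patch the corner case $(s,k)=(1,0)$ where the shifted function fails to vanish at $0$ --- a point the paper's proof glosses over by applying (\ref{37}) directly to $g(\cdot+1)$.
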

\begin{proof}
The method of proof is built on ideas in Barbour \emph{at al.} (1992), 
Goldstein and Rinott (1996), Papadatos and Papathanasiou (2002) and Reinert (2005). 
Let $g$ be any function belonging to $\mathcal{F}_s$. As a 
preliminary, we observe that $W\leq W_i+1\leq W+1$ a.s. for each $i=1,\ldots,n$.

Now, consider the case of $(s,{\bf \delta})$-LND. Using (\ref{37}) and the assumption 
that $g$ is non-decreasing, we obtain that
\begin{eqnarray*}
E[Wg(W)] &=&\sum_{i=1}^n E[X_ig(W)]  \;\;=\;\; \sum_{i=1}^n E[X_ig(W_i+1)]
\;\;\leq\;\; \sum_{i=1}^n\delta_i E[g(W_i+1)]\nonumber\\
&\leq& \sum_{i=1}^n\delta_i E[g(W+1)] \;\;=\;\; \delta E[g(W_{\alpha})].
\end{eqnarray*}
As $g(0)=0$, and $EW/\delta \geq p\in (0,1]$, we find from (\ref{43}) that
\begin{eqnarray*}
E[g(W_{\alpha})] &\geq& \frac{E[Wg(W)]}{EW}\; \frac{EW}{\delta}\\
&\geq& pE[g(W_{\beta})] \;\;=\;\; E[g(v_p W_{\beta})],
\end{eqnarray*}
hence the ordering (\ref{40}).

The case of $(s,{\bf \delta})$-LPD is treated similarly. By (\ref{38}) and since $g$ is 
non-decreasing, we get
\begin{eqnarray}\label{43}
E[Wg(W)] &=&\sum_{i=1}^n E[X_i g(W_i+1)]
\;\;\geq\;\; \sum_{i=1}^n\delta_i E[g(W_i+1)]\nonumber\\
&=& \delta\sum_{i=1}^nP(V=i) E[g(W+1-X_i)]
\;\;=\;\; \delta E[g(W_{\alpha}-X_V)].
\end{eqnarray}
As before, we then deduce from (\ref{43}) that
\begin{eqnarray*}
E[g(W_{\beta})]&=& \frac{E[Wg(W)]}{EW}
\;\;\geq\;\; E[g(W_{\alpha}-X_V)]\;\frac{\delta}{EW}\\
&\geq& p E[g(W_{\alpha}-X_V)] \;\;=\;\; E[g(v_p(W_{\alpha}-X_V))],
\end{eqnarray*}
proving the ordering (\ref{41}).
\end{proof}

Combining Proposition \ref{res3} and Lemma \ref{res9} would then 
allow us to derive an upper bound for the total variation distance. 

\subsection{Approximate local dependence} \label{aplocal}

Approximate local dependence is becoming a rather popular topic in probability. 
For works related to this idea, see for example Chen (1975), 
Barbour \emph{et al.} (1992) and Chatterjee \emph{et al.} (2005). We wish now to to derive 
an abstract Poisson approximation theorem by combining stochastic ordering 
with such an approach. 

We say that the $n$ indicators $X_1,\ldots,X_n$ are approximately locally negatively dependent 
(ALND) if there exist 
$n$ non-negative reals $\delta_1,\ldots,\delta_n$ (of sum $\delta>0$), and $n$ 
random variables $Y_1,\ldots,Y_n$ on $\mathbb{Z}^+$ such that 
\begin{equation}\label{44}
E[X_ig(W_i-Y_i)] \;\leq\; \delta_i \, E[g(W_i-Y_i)], \quad 1\leq i \leq n,
\end{equation}
for all non-negative, non-decreasing functions $g$. Similarly, $X_1,\ldots,X_n$ are said to be  approximately locally positively dependent (ALPD) if 
\begin{equation}\label{45}
E[X_ig(W_i-Y_i)] \;\geq\; \delta_i \, E[g(W_i-Y_i)], \quad 1\leq i \leq n,
\end{equation}
for all non-negative, non-decreasing functions $g$.

Define
$$
\varepsilon = \sum_{i=1}^n E[X_iY_i], \; \mbox { and } \;
\varepsilon_\ast = \varepsilon + \sum_{i=1}^n\delta_i E[X_i+Y_i],
$$
and let
\begin{equation*}
c_\lambda = (\lambda+1)(1-e^{-\lambda})/\lambda + 2d_\lambda, \; \mbox 
{ with } \; d_\lambda = 1 \wedge \sqrt{2/e\lambda}. 
\end{equation*}

\begin{theorem} \label{res11}
If $EW=\lambda$ and the indicators $\{X_i : 1\leq i\leq n\}$ are ALND, then 
\begin{equation}\label{47}
d_{TV}(\mathcal{L}(W),\textrm{Po}(\lambda)) \leq \frac{1-e^{-\lambda}}{\lambda}\; 
\left(|\mbox{Var}(W)-\lambda|+ 2\varepsilon\right) + c_\lambda\; |\delta-\lambda|,
\end{equation}
while if the indicators $\{X_i : 1\leq i\leq n\}$ are ALPD, then 
\begin{equation*}
d_{TV}(\mathcal{L}(W),\textrm{Po}(\lambda)) \leq \frac{1-e^{-\lambda}}{\lambda} \;
(|\mbox{Var}(W)-\lambda|+2\varepsilon_\ast)+ c_\lambda \;|\delta-\lambda|.
\end{equation*}
\end{theorem}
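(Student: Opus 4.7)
The strategy is to extend Theorem \ref{res8} by establishing an approximate analogue of the size-biasing ordering in Lemma \ref{res9} under ALND/ALPD, and then substitute it into the Poisson Stein equation $Eh(W)-Eh(\pi) = \lambda Ef(W+1) - E[Wf(W)]$, using the classical Poisson Stein bounds $\|Sh\|_\infty \leq d_\lambda$ and $\|\Delta Sh\|_\infty \leq (1-e^{-\lambda})/\lambda$ to convert the resulting moment quantities into the stated total variation bound.

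First I would prove, for any non-negative non-decreasing function $g$, an approximate moment comparison: expanding $E[Wg(W)] = \sum_i E[X_i g(W_i+1)]$ and decomposing
\[
g(W_i+1) = g(W_i - Y_i + 1) + \bigl[g(W_i+1) - g(W_i - Y_i + 1)\bigr],
\]
the hypothesis (\ref{44}) or (\ref{45}) bounds the first piece while the second is controlled by $\|\Delta g\|_\infty E[X_i Y_i]$.  Summing over $i$ and using monotonicity of $g$ to replace $W_i - Y_i$ (or $W_i$) by $W$ yields, under ALND, $E[Wg(W)] \leq \delta E[g(W+1)] + \|\Delta g\|_\infty \varepsilon$, and under ALPD the reverse inequality with $\varepsilon$ replaced by $\varepsilon_*$ (the additional contribution $\sum_i \delta_i E[X_i+Y_i]$ appears because the monotonicity shortcut now works against the inequality direction). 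Second, these bounds are fed into the Stein equation after splitting $\lambda = \delta + (\lambda - \delta)$: the residual $(\lambda - \delta) Ef(W+1)$ is absorbed by $\|f\|_\infty \leq d_\lambda$ into the $c_\lambda|\delta-\lambda|$ contribution, while the remaining $\delta Ef(W+1) - E[Wf(W)]$ is treated using the approximate ordering together with the identity
\[
\lambda Ef(W+1) - E[Wf(W)] = \lambda E[\Delta f(W)] - \mbox{Cov}(W,f(W)),
\]
valid because $EW = \lambda$.  Bounding the covariance termwise using $\|\Delta f\|_\infty$ reduces the remaining expression to a multiple of $|E[W^2] - \lambda(\lambda+1)| = |\mbox{Var}(W) - \lambda|$.

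The main obstacle is that the Stein solution $f$ is in general neither non-negative nor monotone, whereas the hypotheses (\ref{44})--(\ref{45}) only apply to non-negative non-decreasing test functions.  I would circumvent this by decomposing $\Delta f = (\Delta f)^+ - (\Delta f)^-$ and setting $f_\pm(j) := \sum_{k=0}^{j-1}(\Delta f)^\pm(k)$, both non-negative, non-decreasing, and satisfying $\|\Delta f_\pm\|_\infty \leq \|\Delta f\|_\infty$; the approximate orderings from the first step then apply separately to $f_+$ and $f_-$, and the relation $f = f(0) + f_+ - f_-$ recombines them, with the constant $f(0)$ cancelling thanks to $EW = \lambda$.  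The composite Stein factor $c_\lambda = (\lambda+1)(1-e^{-\lambda})/\lambda + 2d_\lambda$ then emerges by pairing the $\|\Delta f\|_\infty$-contribution from reconciling $\delta$ and $\lambda$ inside the main comparison with the $\|f\|_\infty$-contribution on the residual $(\lambda-\delta)Ef(W+1)$.
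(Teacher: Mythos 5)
Your first step is the paper's opening move: writing $E[Wg(W)]=\sum_i E[X_i g(W_i-Y_i+1)]+\sum_i E[X_i\{g(W_i+1)-g(W_i-Y_i+1)\}]$, bounding the first sum by the ALND/ALPD hypothesis and the second by $\|\Delta g\|_\infty\varepsilon$, with $\varepsilon_\ast$ absorbing the extra loss in the ALPD direction; and your splitting of a general Stein solution into non-negative, non-decreasing pieces $f_\pm$ with $\|\Delta^j f_\pm\|_\infty\leq\|\Delta^j f\|_\infty$ is also what the paper does. The gap is in your second step. The moment comparison yields only a \emph{one-sided} inequality: under ALND it gives, for each monotone piece, the lower bound $E[Af(W)]\geq -(\delta-\lambda)E[f(W+1)]-\|\Delta f\|_\infty\varepsilon$, and nothing in your argument produces the matching upper bound. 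Your proposed route --- the identity $\lambda E[f(W+1)]-E[Wf(W)]=\lambda E[\Delta f(W)]-\mbox{Cov}(W,f(W))$ followed by ``bounding the covariance termwise using $\|\Delta f\|_\infty$'' --- does not deliver $|\mbox{Var}(W)-\lambda|$: a termwise Lipschitz bound gives $|\mbox{Cov}(W,f(W))|\leq\|\Delta f\|_\infty\mbox{Var}(W)$ and $\lambda|E[\Delta f(W)]|\leq\|\Delta f\|_\infty\lambda$, so you end up with $\|\Delta f\|_\infty(\lambda+\mbox{Var}(W))$, which does not vanish in the Poisson limit. The entire difficulty is that the cancellation between $\lambda E[\Delta f(W)]$ and $\mbox{Cov}(W,f(W))$ must be realized exactly, and your proposal does not say how.

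The paper's device for this is a reflection: for $f$ non-negative and non-decreasing, set $\tilde f(x)=\|f\|_\infty+\|\Delta f\|_\infty\,x-f(x)$, which is again non-negative and non-decreasing, so the one-sided ALND bound applies to $\tilde f$ as well. Since $E[A1]=0$ (because $EW=\lambda$) and $E[AW]=-(\mbox{Var}(W)-\lambda)$, one has $E[A\tilde f(W)]=-\|\Delta f\|_\infty(\mbox{Var}(W)-\lambda)-E[Af(W)]$, and the lower bound applied to $\tilde f$ converts into the missing upper bound for $f$. This is also precisely where the term $\|\Delta f\|_\infty|\mbox{Var}(W)-\lambda|$ and the contribution $\|\Delta f\|_\infty(\lambda+1)$ to the coefficient of $|\delta-\lambda|$ (via $E[\tilde f(W+1)]$) come from, which your final accounting of $c_\lambda$ presupposes. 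Without this reflection, or some equivalent two-sided mechanism, your argument does not close.
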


Before proving Theorem \ref{res11}, we give an example of its application.

\begin{example}
We examine a variation of the classical birthday problem; 
see also Barbour \emph{et al.} (1992). Suppose we independently colour 
$N\geq2$ points with one of $m$ colours, each colour being chosen equiprobably. 
Let $\Gamma$ be the set of all subsets $i\subseteq\{1,\ldots,N\}$ of size $2$. 
For $i\in \Gamma$, let $Z_i$ be the indicator that the points indexed 
by $i$ have the same colour. Moreover, suppose we choose uniformly $r$ 
of the $|\Gamma|={N\choose 2}$ pairs of points, independently of the
colourings chosen. For $i\in\Gamma$, we let $\xi_i=0$ if the pair of 
points indexed by $i$ is chosen, and otherwise set $\xi_i=1$. 

Set $W=\sum_{i\in\Gamma}Z_i\xi_i$. This counts the number of pairs of points with the same colour, excluding those $r$ pairs of points we have chosen. In the case where 
$r=0$, this corresponds to the classical birthday problem. A bound in the Poisson
approximation of $W$ in this case is given by Arratia \emph{et al.} (1989, Example 2).

We observe that for all $i,j\in\Gamma$, $E\left[Z_i\right]=m^{-1}$ and 
$E\left[Z_iZ_j\right]=m^{-2}$. Furthermore, 
$$
E\left[\xi_i\right]=\frac{{N\choose 2}-r}{{N\choose 2}},\hspace{12pt}
\mbox{ and }\hspace{12pt} E\left[\xi_i\xi_j\right]=
\frac{{N\choose 2}-r}{{N\choose 2}}
\left(\frac{{N\choose 2}-r-1}{{N\choose 2}-1}\right), \quad i\not=j.
$$
Straightforward calculations then give
$$
\lambda=E[W]=\frac{{N\choose 2}-r}{m},\hspace{12pt}
\mbox{ and }\hspace{12pt} \lambda-\mbox{Var}(W)=\frac{{N\choose 2}-r}{m^2}.
$$
Now, we write $W_i=W-Z_i\xi_i\;$ and choose
$$
Y_i=\sum_{j\not=i}Z_j\xi_jI_{(i\cap j\not=\emptyset)}, \;\;\mbox{ and }\;\; \delta_i=E\left[Z_i\xi_i\right].
$$
The condition (\ref{44}) holds true with these choices. Indeed, 
$W_i-Y_i$ is independent of $Z_i$ and the $\xi_i$ are negatively 
related by construction. Thus, for all non--decreasing functions $g$, we have
$$
E\left[Z_i\xi_ig(W_i-Y_i)\right] \;\;=\;\; 
E\left[Z_i\xi_i\right]E\left[g(W_i-Y_i)|\xi_i=1\right] \;\;\leq\;\; 
E\left[Z_i\xi_i\right]E\left[g(W_i-Y_i)\right],
$$
as required. We further see that
\begin{eqnarray*}
\varepsilon \;=\; \sum_{i\in\Gamma}E[Z_i\xi_iY_i] &=& 
\sum_{i\in\Gamma}\sum_{j\not=i}E\left[Z_iZ_j\right]
E\left[\xi_i\xi_j\right]I_{(i\cap j\not=\emptyset)}\\
&=& \frac{2(N-1)\left\{{N\choose 2}-r\right\}
\left\{{N\choose 2}-r-1\right\}}{m^2\left\{{N\choose 2}-1\right\}}.
\end{eqnarray*}
Evaluating (\ref{47}) then gives the following bound.
\begin{corollary}\label{cor_ex4}
With $W$ as above,
\begin{equation*}
d_{TV}(\mathcal{L}(W),\textrm{Po}(\lambda)) \leq \frac{1-e^{-\lambda}}{m}\left\{1+4(N-1)\left(\frac{{N\choose 2}-r-1}{{N\choose 2}-1}\right)\right\}.
\end{equation*}
\end{corollary}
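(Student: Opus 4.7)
The plan is to apply Theorem \ref{res11} directly, using the ALND case, once we verify the condition (\ref{44}) for the chosen $Y_i$ and $\delta_i$, and then simplify the resulting bound using the explicit moments already computed in the example.

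First I would verify that the triple $(X_i, Y_i, \delta_i) = (Z_i\xi_i,\, \sum_{j\neq i} Z_j\xi_j I_{(i\cap j\neq\emptyset)},\, E[Z_i\xi_i])$ satisfies (\ref{44}). The key structural observation is that $W_i - Y_i = \sum_{j:\, i\cap j = \emptyset} Z_j\xi_j$ depends only on colourings of points outside $i$ and on choices of pairs disjoint from $i$. Since the colourings are independent across points, $W_i - Y_i$ is independent of $Z_i$. Moreover, since $(\xi_j)_{j\in\Gamma}$ arise from uniform sampling without replacement of $r$ pairs, they are negatively related in the sense of (\ref{29}); hence conditioning on $\xi_i = 1$ (pair $i$ was not chosen) can only decrease the expectation of any non-decreasing function of $(\xi_j)_{j\neq i}$. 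Combining these gives
\[
E[Z_i\xi_i\, g(W_i - Y_i)] = E[Z_i\xi_i]\, E[g(W_i - Y_i)\mid \xi_i = 1] \leq E[Z_i\xi_i]\, E[g(W_i - Y_i)],
\]
which is exactly (\ref{44}) with $\delta_i = E[Z_i\xi_i]$.

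Next I would compute $\delta$. By construction,
\[
\delta = \sum_{i\in\Gamma} \delta_i = \sum_{i\in\Gamma} E[Z_i\xi_i] = E[W] = \lambda,
\]
so the term $c_\lambda\,|\delta - \lambda|$ in (\ref{47}) vanishes. Plugging the computed values $\lambda - \mathrm{Var}(W) = (\binom{N}{2}-r)/m^2$ and the expression for $\varepsilon$ into (\ref{47}) yields
\[
d_{TV}(\mathcal{L}(W),\textrm{Po}(\lambda)) \leq \frac{1-e^{-\lambda}}{\lambda}\left(\frac{\binom{N}{2}-r}{m^2} + \frac{4(N-1)\{\binom{N}{2}-r\}\{\binom{N}{2}-r-1\}}{m^2\{\binom{N}{2}-1\}}\right).
\]
Factoring out $(\binom{N}{2}-r)/m^2$ and using $\lambda = (\binom{N}{2}-r)/m$ to cancel $\lambda$ against one factor of $(\binom{N}{2}-r)/m$, the prefactor collapses to $(1-e^{-\lambda})/m$, and the bracketed factor becomes $1 + 4(N-1)(\binom{N}{2}-r-1)/(\binom{N}{2}-1)$, giving exactly the stated bound.

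The only step that requires any real thought is the verification of (\ref{44}); the rest is bookkeeping. I would expect the main obstacle to be checking cleanly that $W_i - Y_i$ is independent of $Z_i\xi_i$ (which uses independence of the colouring from the pair selection, plus the disjointness built into $Y_i$), and then justifying the inequality via negative relation of the $\xi_j$'s rather than positive or independence --- although the $\xi_j$'s are not independent because of the without-replacement sampling, their negative relation is exactly what we need to push the conditioning in the right direction.
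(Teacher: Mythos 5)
Your proposal follows the paper's own route exactly: the same choice of $Y_i$ and $\delta_i = E[Z_i\xi_i]$, the same verification of (\ref{44}) via independence of $W_i-Y_i$ from $Z_i$ together with negative relation of the $\xi_j$'s, and the same substitution of $\lambda-\mathrm{Var}(W)$ and $\varepsilon$ into (\ref{47}). Your explicit remark that $\delta=\lambda$ kills the $c_\lambda|\delta-\lambda|$ term is a detail the paper leaves implicit, but it is the correct and necessary observation; the proposal is correct.
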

In the case $r=0$, a bound of the same order was established by Arratia \emph{et al.} (1989, Example 2).
\end{example}

\subsection{Proof of Theorem \ref{res11}} \label{proof1}

(i) Consider the ALND case. We suppose first that $f$ 
is any non-negative, non-decreasing function. Arguing as for 
Lemma \ref{res9}, we have
\begin{eqnarray*}
E[Wf(W)] &=&\sum_{i=1}^n E[X_if(W)] \;\;=\;\; \sum_{i=1}^n E[X_if(W_i+1)]\\
&=& \sum_{i=1}^n E[X_if(W_i-Y_i+1)] +\sum_{i=1}^n E\{X_i[f(W_i+1)-f(W_i-Y_i+1)]\},
\end{eqnarray*}
which we denote by $T_1+T_2$. We bound the sum $T_2$ by noting that 
$$
|f(x)-f(y)| \;\leq\; \|\Delta f\|_\infty\; |x-y|, 
$$
which yields
$$
T_2 \;\leq\; \|\Delta f\|_{\infty} \sum_{i=1}^n E(X_i Y_i) \;=\; \|\Delta f\|_{\infty} 
\; \varepsilon.
$$
For the sum $T_1$, by (\ref{44}) and since $f$ is non-decreasing, we get
$$
T_1 \;\leq\; \sum_{i=1}^n\delta_i E[f(W_i-Y_i+1)] \;\leq\; \sum_{i=1}^n \delta_i E[f(W+1)]
\;=\; \delta E[f(W+1)].
$$
Inserting these two bounds, we find that
\begin{eqnarray}\label{50}
E[Af(W)]&=&\lambda E[f(W+1)] - E[Wf(W)] \nonumber\\
 &\geq & -(\delta-\lambda) E[f(W+1)] - \|\Delta f\|_\infty \; \varepsilon.
\end{eqnarray}
To get an upper bound, we define a function $\tilde{f}$ on 
$\{0, 1, \ldots, n-1\}$ by
\begin{equation}\label{51}
\tilde{f}(x)=\|f\|_\infty  + \|\Delta f\|_\infty\; x - f(x).
\end{equation}
Note that $\tilde{f}$ is, as $f$, a non-negative, non-decreasing function.
By assumption, $EW=\lambda$ so that $E[A1]=0$; observe also that 
$E[AW]=\lambda E[W+1]- E[W^2] = -[\mbox{Var}(W)-\lambda]$. Thus,
\begin{eqnarray*}
E[A\tilde{f}(W)]&=&\|g\|_\infty \; E[A1] + \|\Delta f\|_\infty \; E[AW]- E[Af(W)]\\
&=& -\|\Delta f\|_\infty \; [\mbox{Var}(W)-\lambda]- E[Af(W)].
\end{eqnarray*}
On the other hand, (\ref{50}) is applicable to the function $\tilde{f}$, so that
$$
E[A \tilde{f}(W)] \geq
 -(\delta-\lambda) E[\tilde{f}(W+1)] -\|\Delta \tilde{f}\|_\infty \; \varepsilon.
$$
From these two formulas, we deduce that
\begin{equation}\label{52}
E[Af(W)] \leq \|\Delta \tilde{f}\|_\infty \; \varepsilon
+ (\delta-\lambda) E[\tilde{f}(W+1)]  + \|\Delta f\|_\infty \; |\mbox{Var}(W)-\lambda|.
\end{equation}

Now, let $f$ be an arbitrary function. We start with the standard decomposition
$f=f_+ - f_-$,
where $f_+$ and $f_-$ are non-negative, non-decreasing functions with, of course,
\begin{equation}\label{53}
\|\Delta^j f_+\|_\infty\leq \|\Delta^j f\|_\infty, \; \mbox { and }\; 
\|\Delta^j f_-\|_\infty\leq \|\Delta^j f\|_\infty, \quad j=0,1.
\end{equation}
By (\ref{50}) and (\ref{52}), we obtain an upper bound
\begin{eqnarray*}
E[Af(W)]&=& E[Af_+(W)] - E[Af_-(W)]\\
&\leq& \|\Delta \tilde{f_+}\|_{\infty} \;\varepsilon
+ (\delta-\lambda) E[\tilde{f_+}(W+1)]  + \|\Delta f_+\|_\infty \; |\mbox{Var}(W)-\lambda|\\
&&\qquad
+(\delta-\lambda) E[f_-(W+1)] +\|\Delta f_-\|_\infty \; \varepsilon\\
&=& \|\Delta f_+\|_\infty \; |\mbox{Var}(W)-\lambda| + 
(\|\Delta \tilde{f_+}\|_{\infty}+\|\Delta f_-\|_\infty) \;\varepsilon\\
&&\qquad 
+ (\delta-\lambda) \; \{\|f_+\|_\infty + \|\Delta f_+\|_\infty \;(\lambda +1)
 -E[f(W+1)]\},
\end{eqnarray*}
using (\ref{51}) and $EW=\lambda$ for the last equality. By a similar method, we find 
as a lower bound 
\begin{eqnarray*}
E[Af(W)] &\geq& - (\delta-\lambda) E[{f_+}(W+1)] - \|\Delta f_+\|_\infty \; 
\varepsilon\\ 
&& \qquad - \|\Delta \tilde{f_-}\|_\infty\; \varepsilon
-(\delta-\lambda) E[\tilde{f_-}(W+1)] -\|\Delta f_-\|_\infty \; |\mbox{Var}(W)-\lambda|\\
&=& -\|\Delta f_-\|_\infty \; |\mbox{Var}(W)-\lambda| - 
(\|\Delta f_+\|_\infty + \|\Delta \tilde{f_-}\|_\infty)\; \varepsilon\\
&&\qquad -(\delta-\lambda) \; \{\|f_-\|_\infty + \|\Delta f_-\|_\infty \;(\lambda +1)
 +E[f(W+1)]\}.
\end{eqnarray*}
By (\ref{53}) and since $\|\Delta \tilde{f}\|_\infty \leq \|\Delta f\|_\infty$, 
combining the two previous bounds then yields
\begin{equation}\label{54}
|E[Af(W)]|\leq \|\Delta f\|_\infty \; (|\mbox{Var}(W)-\lambda| +2 \varepsilon)
+ |\delta-\lambda| \;[2 \|f\|_\infty +  \|\Delta f\|_\infty \;(\lambda +1)].
\end {equation}
With $f=Sh$, it now suffices to apply in (\ref{54}) the standard bounds 
$$
\|\Delta Sh\|_\infty \leq 
\lambda^{-1}(1-e^{-\lambda}) \|h\|_\infty, \mbox { and } \; 
\|Sh\|_\infty\leq d_\lambda \|h\|_\infty,
$$
which gives (\ref{47}).

(ii) The ALPD case is dealt with analogously. For $f$ non-negative, 
non-decreasing, we first write that
\begin{eqnarray*}
E[Wf(W)] & = &\sum_{i=1}^n E[X_if(W_i-Y_i+1)]
+\sum_{i=1}^n E[X_i\{f(W_i+1)-f(W_i-Y_i+1)\}]\\
&\geq &\sum_{i=1}^n E[X_if(W_i-Y_i+1)]  - \|\Delta f\|_\infty \; \varepsilon.
\end{eqnarray*}
By (\ref{45}), we then get that
\begin{eqnarray*}
E[Wf(W)] &\geq& \sum_{i=1}^n\delta_i E[f(W_i-Y_i+1)]- \|\Delta f\|_\infty
\;\varepsilon\\
&=& \delta E[f(W+1)]
- \sum_{i=1}^n\delta_i E[f(W+1)-f(W_i-Y_i+1)]
- \|\Delta f\|_\infty \;\varepsilon\\
&\geq& \delta E[f(W+1)]
- \|\Delta f\|_\infty \; \sum_{i=1}^n\delta_i E(X_i+Y_i)
- \|\Delta f\|_\infty \; \varepsilon \\
&=& \delta E[f(W+1)] - \|\Delta f\|_\infty \;\varepsilon_\ast.
\end{eqnarray*}
Overall, we find that
\begin{equation*}
E[Af(W)] = \lambda E[f(W+1)] - E[Wf(W)]
\geq  -(\delta-\lambda) E[f(W+1)] +\|\Delta f\|_\infty\; \varepsilon_\ast.
\end{equation*}
The rest of the proof follows as in the ALND case.

\section{Translated Poisson approximation} \label{tp}

We assume, as in Section \ref{pois}, 
that $W=X_1+\cdots+X_n$ is a sum of (possibly dependent) 
indicator random variables, with $p_i=P(X_i=1)$.  Denote 
$$
\lambda_k=\sum_{i=1}^np_i^k, \quad \lambda=\lambda_1=E[W], 
\quad \mbox {and}\quad \sigma^2=\mbox{Var}(W).
$$
We are going to discuss the approximation of $W$ by a 
translated Poisson distribution.

\subsection{Main results}

A random variable $Z$ has a translated Poisson distribution 
$\textrm{TP}(\lambda,\sigma^2)$ if $Z$ is distributed 
as $Z^\prime+\rho$, where 
$Z^\prime\sim\mbox{Po}(\sigma^2+\gamma)$ with  
$$
\rho=\lambda-\sigma^2-\gamma, \;\;\mbox{  and  }\;\; 
\gamma=\langle\lambda-\sigma^2\rangle\in[0,1),
$$
$\langle x\rangle=x-\lfloor x\rfloor$ denoting the fractional part of $x$.

We note that $E[Z]=\lambda$ and $\sigma^2\leq\mbox{Var}(Z)=
\sigma^2+\gamma<\sigma^2+1$, so that our approximating translated Poisson
distribution has a mean equal to, and variance close to, that of $W$.  
We would thus expect a closer approximation than could be
obtained by simply using the one--parameter Poisson distribution.  
The variances of $W$ and $Z$ cannot necessarily be made to match
exactly, as we must shift our Poisson distribution by an integer.  
However, the error term arising from this mismatch does not 
adversely affect the order of the bounds we obtain, as we shall see below.  

The following results give us bounds in translated 
Poisson approximation for $W$ under some stochastic ordering assumptions.
We defer the proofs of Theorems \ref{tpneg} and \ref{tppos} 
until Section \ref{tpproofs}, giving first some examples of their 
application, in Section \ref{runs}.

Our bounds demonstrate convergence to a translated Poisson 
distribution if $\sigma\rightarrow\infty$ as $n\rightarrow\infty$.
Bounds on the total variation distance between $\mathcal{L}(W)$ 
and a translated Poisson random variable may still be found if this 
is not the case,
but require a different analysis of the error terms.  For example, 
in proving Theorems \ref{tpneg} and \ref{tppos}, we write
$P(W-\rho<0)\leq\sigma^{-2}$.  This error term may be reduced, or 
even omitted altogether depending on the problem at hand, with a 
more careful analysis.  This could give us good bounds in cases 
where $\sigma\rightarrow\sigma_\infty<\infty$ as $n\rightarrow\infty$.  

In the sequel, we let $W^s$ be a random variable having the 
$W$--size--biased distribution, and $v_q$ be an indicator random 
variable, independent of all else, with $P(v_q=1)=q$. As before,  
we write $W_i=W-X_i$, $1\leq i\leq n$, and for any random index $V$ we let
$W_V=W-X_V$.

\begin{theorem} \label{tpneg}
Suppose that $X_1,\ldots,X_n$ are negatively related, and there is 
$q\in[0,1]$ and $l\in\mathbb{Z}^+$ such that
\begin{equation}\label{56}
(W+1|X_k=0)\preceq_{st}(W+l+v_q|X_k=1),\quad 1\leq k\leq n.
\end{equation}
Then,
\begin{multline} \label{57}
d_{TV}(\mathcal{L}(W),\mbox{TP}(\lambda,\sigma^2)) \leq 
\frac{2}{\sigma^2} + \frac{\lambda_2+(l+q)(\lambda-\lambda_2)}{\lambda\sigma} \\
+ \frac{l(l+2q-1)(\lambda-\lambda_2)}{\sigma^2}\,d_{TV}(\mathcal{L}
(W^s),\mathcal{L}(W^s+1)).
\end{multline}
\end{theorem}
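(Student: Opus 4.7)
The strategy is Stein's method for $\textrm{Po}(\mu)$ with $\mu=\sigma^2+\gamma$ applied to the shifted variable $W-\rho$, combined with the size-bias coupling available for sums of indicators. Given $h=I_B$, I would set $\tilde h(j)=h(j+\rho)$ and take $g$ to solve the Poisson Stein equation $\mu g(j+1)-jg(j)=\tilde h(j)-E\tilde h(Z')$ for $j\geq 0$, extending $g$ by zero on negative integers. Standard Stein factors then give $\|g\|_\infty\leq\sqrt{2/(e\mu)}$ and $\|\Delta g\|_\infty\leq 1/\mu$, of respective order $1/\sigma$ and $1/\sigma^2$. The boundary contribution $2/\sigma^2$ arises from isolating the event $\{W<\rho\}$, on which $P(W<\rho)\leq\sigma^{-2}$ by Chebyshev applied to $|W-\lambda|\geq\sigma^2$, together with a symmetric upper-tail estimate for the matching contribution from $g(W-\rho+1)$.

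On the complementary event I would invoke the size-bias identity $E[Wg(W-\rho)]=\lambda E[g(W^s-\rho)]$, substitute $\lambda=\mu+\rho$, and rearrange to obtain
\[
E[Ag(W-\rho)] \;=\; \mu\,E[g(W-\rho+1)-g(W^s-\rho)] + \rho\,E[g(W-\rho)-g(W^s-\rho)].
\]
For a sum of indicators, $W^s$ admits the standard representation as $(W_K\mid X_K=1)+1$ with $P(K=k)=p_k/\lambda$, and negative relation supplies the stochastic ordering $W^s\preceq_{st}W+1$ together with a coupling for which $W^s\leq W+1$ almost surely. Writing $J=W+1-W^s\geq 0$, each difference on the right-hand side telescopes as $\sum_{i=0}^{J-1}\Delta g(W^s-\rho+i)$.

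To quantify $E[J]$ (and the higher-order contribution) I would condition on $K=k$ and on the value of $X_k$. On the diagonal event $\{X_k=1\}$, of conditional weight $p_k$, the contribution sums over $k$ to $\lambda_2/\lambda$. On $\{X_k=0\}$, hypothesis (\ref{56}), rewritten as $(W_k\mid X_k=1)+l+v_q\succeq_{st}(W_k\mid X_k=0)$, bounds the conditional coupling gap stochastically by $l+v_q$, whose expectation is $l+q$; the resulting off-diagonal contribution is $(l+q)(\lambda-\lambda_2)/\lambda$. Combined with the Poisson Stein factor $\|g\|_\infty$ of order $1/\sigma$ (via $|E[g(X+1)-g(X)]|\leq 2\|g\|_\infty$) this yields the second term $[\lambda_2+(l+q)(\lambda-\lambda_2)]/(\lambda\sigma)$. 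The third term of the bound comes from a second-order refinement: when $J\geq 2$ the telescoping expansion produces up to $l+v_q-1$ additional difference terms which, after conditioning on $X_K=0$, are controlled by the smoothing estimate $|E[g(W^s+1)-g(W^s)]|\leq \|g\|_\infty\,d_{TV}(\mathcal{L}(W^s),\mathcal{L}(W^s+1))$; the combinatorial factor $l(l+2q-1)$ emerges from counting pairs of indices in the two-fold telescoping after averaging over the independent $v_q$.

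The main technical obstacle is the quantitative bookkeeping of the coupling: negative relation alone yields only $W^s\preceq_{st}W+1$, so (\ref{56}) must be translated into explicit bounds on both $E[J]$ and the second-moment quantity $E[J(J-1)]$, with the diagonal $X_K=1$ contribution (producing the $\lambda_2$ factor) kept cleanly separate from the off-diagonal $X_K=0$ contribution (producing $\lambda-\lambda_2$). A secondary difficulty is the consistent treatment of the shift $\rho$, which need not be a non-negative integer; extending $g$ to negative arguments and matching the resulting boundary contributions to recover precisely the constant $2$ in the $2/\sigma^2$ bound requires careful attention.
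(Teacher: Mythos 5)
Your overall architecture coincides with the paper's: reduce to the Poisson Stein equation for the shifted variable via R\"ollin's device, pay $P(W-\rho<0)\le\sigma^{-2}$ by Chebyshev, invoke the size-bias identity $E[Wg(W)]=\lambda E[g(W^s)]$, couple $W^s$ below $W+1$ using negative relation, and control the coupling gap on $\{X_K=0\}$ through (\ref{56}) by $l+v_q$, separating the diagonal weight $\lambda_2$ from the off-diagonal weight $\lambda-\lambda_2$. All of that is the right skeleton.

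There is, however, a genuine gap in the quantitative step. After your rearrangement the leading term is $\mu\,E[g(W-\rho+1)-g(W^s-\rho)]$ with $\mu=\sigma^2+\gamma$ in front. Telescoping this as $\sum_{i=0}^{J-1}\Delta g(W^s-\rho+i)$ and bounding summands by $\|\Delta g\|_\infty\le\mu^{-1}$, or by $2\|g\|_\infty=O(1/\sigma)$ as you propose, yields a contribution of order $\mu\cdot\mu^{-1}E[J]=E[J]$, which is $O(1)$ (for $k$--runs, of order $l+q=2k-2$), not $O(1/\sigma)$. Nothing in your proposal neutralises the factor $\mu\approx\sigma^2$. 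The paper does this by a cancellation you have not identified: since $E[W+1-W^s]=1-\sigma^2/\lambda$ and $\lambda=\mu+\rho$, the first-order (mean) parts of your two terms combine to exactly $\gamma\,E[\Delta g(W-\rho)]$, which is $\le\sigma^{-2}$ because $\gamma<1$ --- this, and not a ``symmetric upper-tail estimate,'' is the second half of the $2/\sigma^2$. What survives after this centering is a purely second-difference quantity: sums of $\bigl(\Delta g(j)-\Delta g(W)\bigr)$ weighted by the difference of the conditional survival functions of $(W_V+1\mid X_V=0)$ and $(W_V+1\mid X_V=1)$. Only then do the bounds $|E[\Delta g(W^s-i)-\Delta g(W)]|\le 2\|\Delta g\|_\infty\, d_{TV}(\mathcal{L}(W),\mathcal{L}(W^s-i))$, together with $d_{TV}(\mathcal{L}(W),\mathcal{L}(W^s))\le\sigma/(2\lambda)$ and the triangle inequality producing $i\, d_{TV}(\mathcal{L}(W^s),\mathcal{L}(W^s+1))$, deliver $[\lambda_2+(l+q)(\lambda-\lambda_2)]/(\lambda\sigma)$ and the term $l(l+2q-1)(\lambda-\lambda_2)\sigma^{-2}d_{TV}(\mathcal{L}(W^s),\mathcal{L}(W^s+1))$. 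Your order bookkeeping is in fact inconsistent with the target: the third term of (\ref{57}) carries $\sigma^{-2}$, i.e.\ the Stein factor $\|\Delta g\|_\infty$, whereas the $\|g\|_\infty=O(1/\sigma)$ smoothing estimate you invoke would give $\sigma^{-1}$. Until the variance-matching cancellation is made explicit, the proof does not close.
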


\begin{theorem} \label{tppos}
Suppose that $X_1,\ldots,X_n$ are positively related, and there is 
$q\in[0,1]$ and $l\in\mathbb{Z}^+$ such that
\begin{equation}\label{58}
(W+1|X_k=0)\succeq_{st}(W-l-v_q|X_k=1),\quad 1\leq k\leq n.
\end{equation}
Then,
\begin{multline} \label{59}
d_{TV}(\mathcal{L}(W),\mbox{TP}(\lambda,\sigma^2)) \leq 
\frac{2}{\sigma^2} + \frac{\lambda_2+(l+q)(\lambda-\lambda_2)}{\lambda\sigma} \\
+ \frac{(l+1)(l+2q)(\lambda-\lambda_2)}{\sigma^2}\,d_{TV}(\mathcal{L}(W^s),
\mathcal{L}(W^s+1)).
\end{multline}
\end{theorem}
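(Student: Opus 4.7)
The strategy is to adapt the proof of Theorem \ref{tpneg} by reversing the direction of the stochastic ordering. Let $f$ solve the Poisson Stein equation for $\mathrm{Po}(\sigma^2+\gamma)$, which characterises the law of $Z-\rho$ with $Z\sim\mathrm{TP}(\lambda,\sigma^2)$; the standard Stein-factor bounds on $\|\Delta f\|_\infty$ and $\|\Delta^2 f\|_\infty$ will ultimately produce the $(\lambda\sigma)^{-1}$ and $\sigma^{-2}$ scalings in \eqref{59}. I would apply the Stein equation at $j=W-\rho$ and take expectations; because it is valid only for $j\geq 0$, the event $\{W<\rho\}$ must be handled separately, and Chebyshev gives $P(W-\rho<0)\leq\sigma^{-2}$, accounting for the leading $2/\sigma^2$ term. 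The size-biasing identity $E[Wf(W-\rho)]=\lambda E[f(W^s-\rho)]$ together with $\sigma^2+\gamma=\lambda-\rho$ then reduces the bulk of the estimate to controlling
\begin{equation*}
\lambda\,\bigl|E[f(W-\rho+1) - f(W^s-\rho)]\bigr|,
\end{equation*}
up to a $\rho$-correction that merges with a piece of the main error exactly as in the negative case.

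Next I would construct a joint coupling of $W$ and $W^s$. For each $k$, positive relation gives $(W|X_k=0)\preceq_{st}(W|X_k=1)$, and condition \eqref{58} supplements this with $(W|X_k=1)\preceq_{st}(W|X_k=0)+l+v_q+1$; applying Strassen to both orderings with a common $v_q$ yields a realization $(W^{(0,k)},W^{(1,k)})$ with $W^{(0,k)}\leq W^{(1,k)}\leq W^{(0,k)}+l+v_q+1$ almost surely. Now sample $V$ independently with $P(V=i)=p_i/\lambda$ and, given $V=i$, a Bernoulli $B$ of parameter $p_i$, and set $W^s=W^{(1,V)}$, $W=BW^{(1,V)}+(1-B)W^{(0,V)}$, $X_V=B$. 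This realises the correct joint law of $(W,W^s,X_V)$, with $W^s=W$ on $\{B=1\}$ and $W\leq W^s\leq W+l+v_q+1$ on $\{B=0\}$.

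With this coupling the estimate splits into two pieces. On $\{B=1\}$ one has $f(W-\rho+1)-f(W^s-\rho)=\Delta f(W-\rho)$, contributing $\lambda E[X_V\Delta f(W-\rho)]$, which is bounded by $\lambda_2\|\Delta f\|_\infty$ since $\lambda E[X_V]=\lambda_2$; this produces the $\lambda_2/(\lambda\sigma)$ portion of the middle term of \eqref{59}. On $\{B=0\}$ I would telescope $f(W^s-\rho)-f(W-\rho+1)$ as a signed sum of at most $l+v_q$ values of $\Delta f$; the coarse bound by $\|\Delta f\|_\infty(l+v_q)$, averaged using $E[(1-B)(l+v_q)]=(l+q)(\lambda-\lambda_2)/\lambda$, yields the $(l+q)(\lambda-\lambda_2)/(\lambda\sigma)$ remainder of the middle term. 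The sharper bound involving $d_{TV}(\mathcal{L}(W^s),\mathcal{L}(W^s+1))$ is obtained by replacing each $\Delta f(W-\rho+j)$ in the telescope by $\Delta f(W^s-\rho)$ via the smoothing inequality
\begin{equation*}
|Eg(W^s+j)-Eg(W^s)| \;\leq\; 2|j|\,\|g\|_\infty\,d_{TV}(\mathcal{L}(W^s),\mathcal{L}(W^s+1))
\end{equation*}
applied with $g=\Delta f$; summing over offsets $j\in\{0,\ldots,l+v_q\}$ and averaging yields the coefficient $(l+1)(l+2q)(\lambda-\lambda_2)/\sigma^2$ of $d_{TV}(\mathcal{L}(W^s),\mathcal{L}(W^s+1))$. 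The main obstacle is the combinatorial bookkeeping that produces the exact constants $(l+1)(l+2q)$ and $(l+q)$: the positive-relation sandwich allows $W^s$ to overshoot $W+1$ by a full $l+v_q$, adding one unit to each linear factor compared with \eqref{57}, where $W^s$ is pinned at or below $W+1$.
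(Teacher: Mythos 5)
Your overall architecture matches the paper's: the Stein equation for $\mathrm{Po}(\sigma^2+\gamma)$ applied at $W-\rho$, Chebyshev for $P(W-\rho<0)$, the size-biasing identity, and the use of positive relation together with (\ref{58}) to control the gap between $W^s$ and $W+1$. (The paper works distributionally with the signed differences $P(W_V+1>j\,|\,X_V=0)-P(W_V+1>j\,|\,X_V=1)$ rather than with a pathwise sandwich coupling, but that is a stylistic difference; your coupling can be realised for integer-valued variables via quantile coupling, even though "applying Strassen to both orderings with a common $v_q$" is not automatic and would need an argument.)

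The genuine gap is in the error accounting: every term you bound by $\|\Delta f\|_\infty$ alone comes out as $O(\,\cdot\,/\sigma^2)$, not the $O(\,\cdot\,/(\lambda\sigma))$ claimed in (\ref{59}). Concretely, your $\{B=1\}$ contribution is $\lambda E[X_V\Delta f(W-\rho)]\leq\lambda_2\|\Delta f\|_\infty\leq\lambda_2/\sigma^2$, and your coarse telescope on $\{B=0\}$ gives $(l+q)(\lambda-\lambda_2)/\sigma^2$; these are \emph{not} the $\lambda_2/(\lambda\sigma)$ and $(l+q)(\lambda-\lambda_2)/(\lambda\sigma)$ terms of the theorem, and the discrepancy matters: for $k$--runs, $(\lambda-\lambda_2)/\sigma^2=O(1)$ while $(\lambda-\lambda_2)/(\lambda\sigma)=O(1/\sqrt{n})$, so your version of the bound is vacuous in the main application. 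The missing ingredient is that the paper never leaves a bare $E[\Delta g(\cdot)]$ term: it always forms differences $E[\Delta g(W^s-i)-\Delta g(W)]$ (the subtraction of $\Delta g(W)$ is free because the relevant signed measure has total mass exactly equal to the mean mismatch $\sigma^2-\lambda+\lambda_2$, which is absorbed at that stage), and then uses $2d_{TV}(\mathcal{L}(W),\mathcal{L}(W^s))=E|1-\lambda^{-1}W|\leq\sigma/\lambda$ as in (\ref{71}) to convert $\|\Delta g\|_\infty\leq\sigma^{-2}$ into a factor $\sigma^{-2}\cdot\sigma/\lambda=(\lambda\sigma)^{-1}$. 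Your proposal needs this centring step (comparing $\mathcal{L}(W^s-i)$ to $\mathcal{L}(W)$ in total variation and splitting off $i\,d_{TV}(\mathcal{L}(W^s),\mathcal{L}(W^s+1))$ by the triangle inequality) to recover both the middle term and the stated coefficient $(l+1)(l+2q)$. A smaller bookkeeping point: Chebyshev accounts for only $1/\sigma^2$; the other $1/\sigma^2$ in the leading term comes from the rounding term $\gamma E[\Delta g(W)]$ with $\gamma<1$.
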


Consider the stochastic ordering assumptions (\ref{56}) and (\ref{58}). 
We note that the choice of $l$ and $q$ is not unique, in that choosing 
$l=m$, $q=1$ gives the same assumption as choosing $l=m+1$, $q=0$. 
It is easily checked, however, that each of these choices gives rise to 
the same  bounds in (\ref{57}) and (\ref{59}). In the examples below, 
we will verify the validity of such stochastic orderings by using 
an appropriate coupling argument.

\subsection{Applications} \label{runs}

\begin{example}
Suppose that $X_1,\ldots,X_n$ are independent. Thus, 
they are also negatively related. Moreover, the condition (\ref{56}) 
is true for $q=l=0$. Therefore, (\ref{57}) is applicable and yields the following.
\begin{corollary}\label{cor_ex5}
With $W$ as above,
$$
d_{TV}(\mathcal{L}(W),\mbox{TP}(\lambda,\sigma^2)) \leq \frac{\lambda_2}{\lambda\sigma} + \frac{2}{\sigma^2}.
$$
\end{corollary}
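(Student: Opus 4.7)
The plan is to derive Corollary \ref{cor_ex5} as a direct application of Theorem \ref{tpneg} with the parameter choices $l = 0$ and $q = 0$. The task splits into two verifications followed by bookkeeping.

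First I would verify that independent indicators are negatively related. For independent $X_1,\ldots,X_n$ and any non-decreasing $g\colon\{0,1\}^{n-1}\to\{0,1\}$, conditioning on $X_i = 1$ does not alter the law of $(X_1,\ldots,X_{i-1},X_{i+1},\ldots,X_n)$, so the conditional expectation in (\ref{29}) equals the unconditional one and the required inequality holds trivially (with equality). This places us within the hypothesis class of Theorem \ref{tpneg}.

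Second I would check the stochastic ordering (\ref{56}) with $l=q=0$, which reads $(W+1 \mid X_k = 0) \preceq_{st} (W \mid X_k = 1)$. Under independence, conditioning on $X_k$ simply fixes the $k$th summand: $(W \mid X_k=0) =_d \sum_{i\neq k} X_i$ while $(W \mid X_k=1) =_d 1 + \sum_{i\neq k} X_i$. Adding $1$ to the first gives the same distribution as the second, so in fact $(W+1 \mid X_k=0) =_d (W \mid X_k=1)$ and the ordering (\ref{56}) holds with equality in distribution, hence in particular in the $\succeq_{st}$ sense. This is the only genuine verification, but it is essentially immediate.

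Finally, with $l=q=0$, the bound (\ref{57}) of Theorem \ref{tpneg} simplifies term by term: the first summand is $2/\sigma^2$, the second becomes
$$
\frac{\lambda_2 + (0+0)(\lambda-\lambda_2)}{\lambda\sigma} \;=\; \frac{\lambda_2}{\lambda\sigma},
$$
and the third summand carries the factor $l(l+2q-1)(\lambda-\lambda_2)/\sigma^2 = 0$, so it drops out entirely (note that the total variation factor $d_{TV}(\mathcal{L}(W^s),\mathcal{L}(W^s+1))$, while possibly nonzero, is multiplied by zero, so its value is immaterial here). Adding the surviving contributions yields exactly $\lambda_2/(\lambda\sigma) + 2/\sigma^2$, as claimed. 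There is no genuine obstacle; the only substantive point is recognising that, for independent indicators, the conditional distributions in (\ref{56}) coincide identically after a single unit shift, which is precisely the degenerate case $l=q=0$ of the general ordering.
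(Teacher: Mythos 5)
Your proposal is correct and follows exactly the paper's own route: independence gives negative relation (with equality in (\ref{29})) and the ordering (\ref{56}) with $l=q=0$ (in fact with equality in distribution), after which (\ref{57}) reduces term by term to the stated bound. The paper's proof is just a terser version of the same argument.
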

This bound is of the order we would expect: see also 
\v Cekanavi\v cius and Va\v \i tkus (2001).
\end{example}

\begin{example}
Suppose that $m$ balls are placed into $N$ urns, in 
such a way that no urn contains more than 
one ball and all arrangements are equally likely.  Let $W$ be the number 
of balls in the first $n$ urns. Thus, $W$ has a hypergeometric distribution with
$$
\lambda=\frac{mn}{N},\;\;\mbox{  and  }\;\;\sigma^2=\frac{mn(N-m)(N-n)}{(N-1)N^2}.
$$   

We set $X_i$ to be the indicator that the $i$th urn contains a ball, so 
that $W=X_1+\cdots+X_n$.  By construction, these indicators are
negatively related. The condition (\ref{56}) holds for $q=1$ and $l=0$. 
To see this, we construct $(W+1|X_k=0)$ by 
considering the $N$ urns and excluding the $k$th.  Distribute the $m$ 
balls in these $N-1$ urns, such that all arrangements are equally 
likely, and count
the number of the first $n$ urns that are occupied.  Adding one to this 
count gives us our random variable.  
We then choose (uniformly and independently of what has gone before) 
one of the occupied urns.  Take the ball from this urn and place 
it in urn $k$.  This gives us $(W+1|X_k=1)$.  If the ball chosen is 
from one of the first $n$ urns, the number of occupied urns is the
same as before.  Otherwise, we have increased the number of 
occupied urns within the first 
$n$. Evaluating the bound (\ref{57}) then gives Corollary \ref{cor_ex6}.
\begin{corollary}\label{cor_ex6}
For $W$ having our hypergeometric distribution,
$$
d_{TV}(\mathcal{L}(W),\mbox{TP}(\lambda,\sigma^2)) \;\leq\; 
\frac{1}{\sigma}+\frac{2}{\sigma^2} \;=\; \sqrt{\frac{N^2(N-1)}{mn(N-m)(N-n)}} 
+ \frac{2N^2(N-1)}{mn(N-m)(N-n)}.
$$
\end{corollary}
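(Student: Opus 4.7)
The plan is to apply Theorem \ref{tpneg} directly, since the preceding discussion in the example has already assembled both of its hypotheses. Writing $W = X_1 + \cdots + X_n$ with $X_i$ the indicator that urn $i$ is occupied, the $X_i$ are negatively related by the usual sampling-without-replacement argument (this is a standard fact about hypergeometric indicators, and can be checked from the fact that conditioning on $X_k = 1$ moves one ball from the pool of the remaining urns, which only decreases the conditional probability of any increasing event in $(X_i)_{i \neq k}$). The stochastic ordering condition (\ref{56}) is verified with $q = 1$ and $l = 0$ by the explicit coupling described in the example: one first constructs $(W + 1 \mid X_k = 0)$ by distributing the $m$ balls uniformly among the $N-1$ urns other than urn $k$ and adding one, and then obtains $(W + 1 \mid X_k = 1)$ by selecting a uniformly random occupied urn and transferring its ball into urn $k$. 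Under this coupling the constructed variable either stays the same (if the selected urn lay in $\{1, \ldots, n\}$) or increases by exactly one (otherwise), so $(W + 1 \mid X_k = 0) \preceq_{st} (W + v_q \mid X_k = 1)$ with $q = 1$, as required.

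With $l = 0$ and $q = 1$ substituted into the bound (\ref{57}), the third term vanishes, since its coefficient is $l(l + 2q - 1) = 0$, which in particular obviates any need to estimate $d_{TV}(\mathcal{L}(W^s), \mathcal{L}(W^s + 1))$. The second term collapses:
$$
\frac{\lambda_2 + (l + q)(\lambda - \lambda_2)}{\lambda \sigma} = \frac{\lambda_2 + (\lambda - \lambda_2)}{\lambda \sigma} = \frac{1}{\sigma}.
$$
Combining with the universal term $2/\sigma^2$ yields the intermediate estimate $d_{TV}(\mathcal{L}(W), \mbox{TP}(\lambda, \sigma^2)) \leq 1/\sigma + 2/\sigma^2$, and the closed-form expression follows by substituting $\sigma^2 = mn(N-m)(N-n)/[(N-1)N^2]$.

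There is no serious technical obstacle: the computation is a matter of book-keeping once Theorem \ref{tpneg} is available. The only point requiring any care is the coupling verification of (\ref{56}), and in particular checking that after the ball transfer the resulting configuration is uniform on the set of $m$-subsets of $\{1, \ldots, N\}$ containing $k$; this follows because the joint distribution of (initial configuration, chosen occupied urn) is uniform on pairs (subset $S$ of $\{1, \ldots, N\} \setminus \{k\}$ of size $m$, element $j \in S$), and the map $(S, j) \mapsto (S \setminus \{j\}) \cup \{k\}$ is a bijection onto (size-$m$ subsets containing $k$, element $j$ playing the role of the displaced index), so the marginal of the transferred configuration is uniform as needed.
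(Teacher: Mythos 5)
Your proposal is correct and follows essentially the same route as the paper: negative relation of the sampling-without-replacement indicators, verification of condition (\ref{56}) with $q=1$, $l=0$ via the ball-transfer coupling, and direct evaluation of (\ref{57}), whose third term vanishes since $l(l+2q-1)=0$. The only addition is your explicit bijection argument showing the transferred configuration is uniform on $m$-subsets containing $k$, which the paper leaves implicit but which is a correct and welcome detail.
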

R\"ollin (2007, Section 4.1) has considered translated Poisson approximation for the 
hypergeometric distribution, and shows that if 
$m=O(n)$  and $N=O(n)$, then one gets a bound in total variation 
distance of  order $O(1/\sqrt{n})$.  This order is also
reflected in our result.
\end{example}

\begin{example}
Suppose $\xi_1,\ldots,\xi_n$ are i.i.d. Bernoulli random variables with
\begin{equation*}
p=P(\xi_i=1)=1-P(\xi_i=0), \quad 1\leq i \leq n.
\end{equation*} 
Fix an integer $k\geq2$, and define 
$$
X_i=\xi_{i}\xi_{i+1}\cdots\xi_{i+k-1}, \quad \mbox {and} \quad W=\sum_{i=1}^nX_i, 
$$
in which, to avoid edge effects, all indices are treated modulo $n$. 
Thus, $W$ counts the number of $k$--runs in our Bernoulli trials. Observe that
$$
\lambda= np^k, \quad \lambda_2=np^{2k}, \quad \mbox{and}\quad 
\sigma^2=\frac{np^k}{1-p}\;(1+p-p^k[2+(2k-1)(1-p)]).
$$
Translated Poisson approximation for $k$--runs was treated by 
R\"ollin (2005, Section 3.2), who gives a bound in total variation distance of 
the form $K/\sqrt{n}$, for some constant $K=K(k,p)$ independent of $n$.  
Barbour and Xia (1999, Section 5) also give a bound of this order for
2--runs.  We shall use our Theorem \ref{tppos} to give an explicit 
bound with this same order.  

It is easily seen that the variables $X_1,\ldots,X_n$ are positively 
related. The condition (\ref{58}) holds by choosing $q=1$ and $l=2k-3$.  
To see that, consider the following construction.  Given the 
Bernoulli random variables $\xi_1,\ldots,\xi_n$, fix some $m\leq n$ and set  
$\xi_{m}=\xi_{m+1}=\cdots=\xi_{m+k-1}=1$, while the others remain 
independent Bernoulli random variables with parameter $p$. 
Counting the number of $k$--runs in these $n$ Bernoulli trials 
gives us $(W|X_m=1)$.  Suppose now we resample
the random variables $\xi_m,\ldots,\xi_{m+k-1}$, conditional on at 
least one of these being zero.  Counting the
number of $k$--runs now gives us $(W|X_m=0)$.  In this resampling 
procedure, one can remove at most $2k-1$ of the $k$--runs that were
originally present.  Thus, our construction implies that 
$(W|X_m=0) + 2k-1 \geq (W|X_m=1)$, or, equivalently, 
$(W+1|X_m=0)\geq(W-2k+2|X_m=1)$, hence the announced values of $q$ and $l$.

Following the work of Section \ref{pois}, to construct $W^s$ we 
choose an index $V$ uniformly from $\{1,\ldots,n\}$, and set
$\xi_V=\xi_{V+1}=\cdots=\xi_{V+k-1}=1$, while the other $\xi_i$ 
remain independent Bernoulli random variables with parameter $p$.  
Lemma 2.1 of Wang and Xia (2008) thus gives us that
$$
d_{TV}(\mathcal{L}(W^s),\mathcal{L}(W^s+1)) \leq 1 \wedge \frac{2.3}{\sqrt{(n-k-1)p^k(1-p)^3}}.
$$  
Using this, Theorem \ref{tppos} yields the following.
\begin{corollary} \label{runsprop}
Let $W$ count the number of $k$--runs in $n$ independent 
Bernoulli trials, each with success probability $p$.  Then,
\begin{multline} \label{60}
d_{TV}(\mathcal{L}(W),\mbox{TP}(\lambda,\sigma^2)) 
\leq \frac{2}{\sigma^2} + \frac{p^k+(2k-2)(1-p^k)}{\sigma}\\ 
+ \frac{(2k-2)(2k-1)np^k(1-p^k)}{\sigma^2}
\left(1\wedge\frac{2.3}{\sqrt{(n-k-1)p^k(1-p)^3}}\right).
\end{multline}
\end{corollary}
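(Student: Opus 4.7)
The plan is to apply Theorem \ref{tppos} directly, since its hypotheses have already been verified in the paragraph preceding the corollary. Specifically, the positive relation of $X_1,\dots,X_n$ follows from the monotonicity of each $X_i$ as a coordinatewise non-decreasing function of $(\xi_1,\dots,\xi_n)$; the stochastic ordering (\ref{58}) holds with $q=1$ and $l=2k-3$ by the coupling that forces $\xi_m=\cdots=\xi_{m+k-1}=1$ under $\{X_m=1\}$ and then resamples those $k$ coordinates conditional on at least one being zero, noting that such a resampling destroys at most the $2k-1$ of the original $k$-runs that overlap positions $m,\dots,m+k-1$; and Wang and Xia (2008, Lemma 2.1) supplies the cited bound on $d_{TV}(\mathcal{L}(W^s),\mathcal{L}(W^s+1))$.

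With these three ingredients in hand, I would substitute the explicit parameters into (\ref{59}). Since $l+q=2k-2$ and $\lambda-\lambda_2=np^k(1-p^k)$, the middle term becomes
\[
\frac{\lambda_2+(l+q)(\lambda-\lambda_2)}{\lambda\sigma}
=\frac{np^{2k}+(2k-2)np^k(1-p^k)}{np^k\,\sigma}
=\frac{p^k+(2k-2)(1-p^k)}{\sigma}.
\]
Using $(l+1)(l+2q)=(2k-2)(2k-1)$, the prefactor of the third term becomes $(2k-2)(2k-1)np^k(1-p^k)/\sigma^2$, and inserting the Wang--Xia bound $1\wedge\tfrac{2.3}{\sqrt{(n-k-1)p^k(1-p)^3}}$ recovers (\ref{60}) verbatim.

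Since the substantive mathematical content (positive relation, the coupling verifying (\ref{58}), and the external bound on $d_{TV}(\mathcal{L}(W^s),\mathcal{L}(W^s+1))$) has already been supplied in the lead-up to the corollary, the only work remaining is algebraic bookkeeping. In that sense there is no genuine obstacle; the one point that deserves care is checking that the coupling actually delivers $l=2k-3$ rather than $l=2k-2$: the resampling decreases $W$ by at most $2k-1$, so $(W\mid X_m=1)\le(W\mid X_m=0)+2k-1$ a.s., which after rewriting as $(W+1\mid X_m=0)\ge(W-(2k-3)-1\mid X_m=1)$ gives precisely the pair $(q,l)=(1,2k-3)$ used above.
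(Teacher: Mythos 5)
Your proposal is correct and follows essentially the same route as the paper: verify positive relation, establish (\ref{58}) with $(q,l)=(1,2k-3)$ via the resample-the-block coupling, invoke Wang and Xia (2008, Lemma 2.1), and substitute into (\ref{59}); the algebra ($l+q=2k-2$, $(l+1)(l+2q)=(2k-2)(2k-1)$, $\lambda-\lambda_2=np^k(1-p^k)$) checks out and reproduces (\ref{60}) exactly.
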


Our bound (\ref{60}) has the same order as that of R\"ollin (2005, Theorem 5) 
and Barbour and Xia (1999, Theorem 5.2) (this latter result applying only to the 
2--runs case).  Numerical comparison of the bounds shows that ours 
generally performs well compared to these other bounds, 
often giving a better result.  Table 1 gives some illustrations, 
with values for comparison taken from R\"ollin (2005).   
\begin{table} 
\begin{center}
\caption{Numerical comparisons for 2--runs.  Upper bounds on total 
variation distance from (a) our result (\ref{60}), (b) R\"ollin 
(2005) and (c) Barbour and Xia (1999).  Missing values are due 
to restrictions on choice of parameters.}
\begin{tabular}{ccccccc}
\hline
&& $p=0.10$ & $p=0.25$ & $p=0.50$ & $p=0.75$ & $p=0.90$\\
\hline
& (a) & 0.1553 & 0.0675 & 0.0500 & 0.0814 & 0.2512 \\
$n=10^6$ & (b) & 0.4463 & 0.2334 & 0.1747 & 0.5528 & $>1$ \\
& (c) & 0.0304 & -- & 0.1251 & 0.6014 & -- \\ 
\hline
&(a) & 0.0155 & 0.0067 & 0.0050 & 0.0081 & 0.0251 \\
$n=10^8$ & (b) & 0.0445 & 0.0233 & 0.0175 & 0.0553 & 0.2554 \\
& (c) & 0.0030 & -- & 0.0125 & 0.0601 & -- \\ 
\hline 
&(a) & 0.0016 & 0.0007 & 0.0005 & 0.0008 & 0.0025 \\
$n=10^{10}$ & (b) & 0.0045 & 0.0023 & 0.0017 & 0.0055 & 0.0255 \\
& (c) & 0.0003 & -- & 0.0013 & 0.0060 & -- \\ 
\hline
\end{tabular}
\end{center}
\end{table}
\end{example}

\subsection{Proof of Theorems \ref{tpneg} and \ref{tppos}} \label{tpproofs}

Our proof is based on that of Propositions \ref{res1} and \ref{res3}, 
using the characterising operator for the Poisson distribution.
We find representations of our Stein equation in conjunction with 
which our dependence and stochastic ordering assumptions may be 
applied.

Throughout this section we let $f=Sh$ be the solution to the 
Stein equation (\ref{2}) with the choices $\alpha_j=\sigma^2+\gamma$ and
$\beta_j=j$, corresponding to the Poisson distribution with mean 
$\sigma^2+\gamma$.  We suppose the test function $h$ has the form
$h(j)=I_{(j\in B)}$ for some $B\subseteq\mathbb{Z}^+$.  We write 
$g_B(j)=f(j-\rho)$.  We note that $g_B$ depends on the choice of 
set $B$, though for notational convenience we will often write simply
$g$ for $g_B$.  We note further that bounds on 
the supremum norm of $f$ also apply to $g$, so that in particular 
$\|\Delta g_B\|_\infty\leq\sigma^{-2}$ for each $B\subseteq\mathbb{Z}^+$.

Following R\"ollin (2007, Section 3), we obtain from the Stein equation that
\begin{equation} \label{61}
d_{TV}(\mathcal{L}(W),TP(\lambda,\sigma^2))\leq \sup_{B\subseteq\mathbb{Z}^+} |E[(\sigma^2+\gamma)g_B(W+1)-(W-\rho)g_B(W)]| + P(W-\rho<0).
\end{equation}
One may bound $P(W-\rho<0)\leq\sigma^{-2}$ using Chebyshev's inequality. 
So, we now concentrate on the first term on the right--hand side of (\ref{61}).  
Throughout our proof, we will make use of the following equalities in distribution:
\begin{equation} \label{tp1}
(W|X_V=1)=_dW^s,\hspace{15pt}\mbox{ and }\hspace{15pt}(W_V|X_V=0)=_d(W|X_V=0).
\end{equation}

\vspace{2mm}
{\it Step (1)}. For this part of the proof, we will consider 
separately the cases where $\sigma^2\leq\lambda$ and $\sigma^2\geq\lambda$.  We begin
by assuming $\sigma^2\leq\lambda$, so that $\rho\geq0$.  Recall that 
\begin{equation} \label{62}
E[Wg(W)]=\lambda E[g(W^s)].
\end{equation}
Using (\ref{62}), we can then write that 
\begin{equation}\label{63}
E[(\sigma^2+\gamma)g(W+1)-(W-\rho)g(W)] = \lambda E[g(\widetilde{W})-g(W^s)],
\end{equation}
where
$$
P(\widetilde{W}=j)= \lambda^{-1}\left\{(\sigma^2+\gamma)P(W+1=j)+\rho P(W=j)\right\}, 
\quad j\geq0.
$$
That is, $\widetilde{W}=W+v_r$ where $v_r$ is a Bernoulli variable 
with success probability $r=\lambda^{-1}(\sigma^2+\gamma)$.  
Note that $r\leq1$ by assumption.  We rewrite (\ref{63}) as
\begin{equation}\label{64}
\lambda E[g(\widetilde{W})-g(W^s)] =
\lambda E[g(\widetilde{W})-g(\overline{W})] + \lambda E[g(\overline{W})-g(W^s)],
\end{equation}
by defining $\overline{W}=W_V+1$, where $V$ is a random index chosen 
according to (\ref{32}).  
For the first term in (\ref{64}) we note that, by conditioning on $v_r$,
\begin{equation} \label{tp2}
\lambda Eg(\widetilde{W}) \;=\; \lambda Eg(W+v_r) \;=\; (\sigma^2+\gamma)E\Delta g(W) + \lambda Eg(W).
\end{equation}
Furthermore, by conditioning on $X_V$ and using the equalities (\ref{tp1}),
\begin{equation} \label{tp3}
\lambda Eg(\overline{W}) \;=\;\lambda Eg(W_V+1) \;=\;\lambda_2 Eg(W^s) + (\lambda-\lambda_2) E[g(W)|X_V=0],
\end{equation}
since $P(X_V=1)=\lambda^{-1}\lambda_2$.  Again by considering conditioning on $X_V$ and using (\ref{tp1}), we have that
\begin{equation} \label{tp4}
(\lambda-\lambda_2) E[g(W)|X_V=0] \;=\; \lambda Eg(W+1) - \lambda_2 Eg(W^s+1).
\end{equation}
Combining (\ref{tp2}), (\ref{tp3}) and (\ref{tp4}) we obtain the following.
\begin{eqnarray}
\nonumber \lambda E[g(\widetilde{W})-g(\overline{W})] &=& (\sigma^2+\gamma-\lambda)E\Delta g(W) + \lambda_2E\Delta g(W^s)\\
\nonumber &=& \lambda_2E[\Delta g(W^s)-\Delta g(W)] + \gamma E\Delta g(W) \\
\label{66} &&\qquad\qquad+ (\sigma^2-\lambda+\lambda_2)E\Delta g(W).
\end{eqnarray}
Now consider the second term of (\ref{64}).  Let us combine it with the final term of 
(\ref{66}).  Since 
$$
E[\overline{W}-W^s]=-\lambda^{-1}(\sigma^2-\lambda+\lambda_2),
$$  
and proceeding as we did in deriving (\ref{3}), we get that
\begin{multline} \label{tp5}
 \lambda E[g(\overline{W})-g(W^s)] + (\sigma^2-\lambda+\lambda_2)E\Delta g(W)\\ 
 = \lambda E\sum_{j=0}^\infty\left(\Delta g(j) - \Delta g(W)\right)
\left[P(\overline{W}> j)-P(W^s> j)\right].
\end{multline}
Using the definition of $\overline{W}$, conditioning on $X_V$ and employing (\ref{tp1}), we have that
\begin{multline} \label{tp6}
\lambda\big[P(\overline{W}> j)-P(W^s> j)\big] \\
=(\lambda-\lambda_2)\big[P(W_V+1> j|X_V=0)-P(W_V+1> j|X_V=1)\big].
\end{multline}
Hence, the right--hand side of (\ref{tp5}) becomes
\begin{equation} \label{67}
(\lambda-\lambda_2) E\sum_{j=0}^\infty\left(\Delta g(j) - 
\Delta g(W)\right)\big[P(W_V+1>j|X_V=0)-P(W_V+1>j|X_V=1)\big].
\end{equation}
Let us now insert the representations (\ref{66}) and (\ref{67}) into (\ref{63}) 
and then (\ref{61}).  We obtain
\begin{multline*}
d_{TV}(\mathcal{L}(W),\mbox{TP}(\lambda,\sigma^2))
\leq (\lambda-\lambda_2)\sup_{B\subseteq\mathbb{Z}^+}\big\{\Lambda_B\big\}
+\lambda_2\sup_{B\subseteq\mathbb{Z}^+}\big|E[\Delta g_B(W^s)-\Delta g_B(W)]\big|\\
+ \gamma\sup_{B\subseteq\mathbb{Z}^+}\big|E\Delta g_B(W)\big| + P(W-\rho<0),
\end{multline*}
where
$$
\Lambda_B = E\sum_{j=0}^\infty|\Delta g_B(j)-\Delta g_B(W)||P(W_V+1>j|X_V=0)-P(W_V+1>j|X_V=1)|.
$$
Recalling that $P(W-\rho<0)\leq\sigma^{-2}$, $\gamma\leq1$ and $\|\Delta g_B\|_\infty\leq\sigma^{-2}$, 
we have that
$$
\gamma\sup_{B\subseteq\mathbb{Z}^+}\big|E\Delta g_B(W)\big| + P(W-\rho<0) \;\leq\; 2\sigma^{-2}.
$$
Furthermore, the random variable $W^s$ having the $W$--size--biased distribution
satisfies
$$
P(W^s=j) \;=\; \lambda^{-1}jP(W=j),\quad 0\leq j\leq n,
$$ 
and so,
\begin{equation}\label{71}
2d_{TV}(\mathcal{L}(W),\mathcal{L}(W^s)) \;=\; \sum_{j=0}^\infty|P(W=j)-
P(W^s=j)| \;=\; E|1-\lambda^{-1}W| \;\leq\; \lambda^{-1}\sigma.
\end{equation}
We thus have that
$$
\lambda_2\big|E[\Delta g_B(W^s)-\Delta g_B(W)]\big| \;\leq\; 2\lambda_2\|\Delta g_B\|_\infty d_{TV}(\mathcal{L}(W),\mathcal{L}(W^s)) \;\leq\; \frac{\lambda_2}{\lambda\sigma}.
$$
Combining the above bounds, we obtain
\begin{equation} \label{68}
d_{TV}(\mathcal{L}(W),\mbox{TP}(\lambda,\sigma^2)) \leq 
(\lambda-\lambda_2)\sup_{B\subseteq\mathbb{Z}^+}\big\{\Lambda_B\big\}
+ \frac{\lambda_2}{\lambda\sigma} + \frac{2}{\sigma^2}.
\end{equation}
In the second step of the proof, we consider how $\Lambda_B$ may be bounded.  
Before doing this, we show that if $\sigma^2\geq\lambda$ then the bound
(\ref{68}) continues to hold.

Consider now the case where $\sigma^2\geq\lambda$, so that $\rho\leq0$.  
We will use an analogous argument to show that the bound
(\ref{68}) continues to hold. In place of (\ref{64}), 
we this time write
\begin{multline}\label{69}
E[(\sigma^2+\gamma)g(W+1)-(W-\rho)g(W)] = (\sigma^2+\gamma) 
E[g(W+1)-g(\widehat{W})]\\
 + (\sigma^2+\gamma) E[g(\widehat{W})-g(W^\star)],
\end{multline}
where $\widehat{W}=W+v_t(1-X_V)$, $W^\star=v_tW^s+(1-v_t)W$ and 
$t=\lambda(\sigma^2+\gamma)^{-1}$.  Consider the first term on the 
right--hand side of (\ref{69}).  For this term, 
we argue as we did to derive (\ref{66}).  Conditioning on 
$v_t$ and $X_V$ and employing the equalities (\ref{tp1}), we find, as
for (\ref{66}), that
\begin{multline*}
(\sigma^2+\gamma)E[g(W+1)-g(\widehat{W})]\\
=\lambda_2E[\Delta g(W^s)-\Delta g(W)] + \gamma E\Delta g(W) 
+ (\sigma^2-\lambda+\lambda_2)E\Delta g(W).
\end{multline*}
As we have that
$$
E[\widehat{W}-W^\star]=-(\sigma^2+\gamma)^{-1}(\sigma^2-\lambda+\lambda_2),
$$
we then write
\begin{multline} \label{70}
(\sigma^2+\gamma) E[g(\widehat{W})-g(W^\star)] + 
(\sigma^2-\lambda+\lambda_2)E\Delta g(W)\\ 
= (\sigma^2+\gamma) E\sum_{j=0}^\infty\left(\Delta g(j) - 
\Delta g(W)\right)[P(\widehat{W} > j)-P(W^\star > j)].
\end{multline}
Using the definitions of $\widehat{W}$ and $W^\star$, and 
conditioning on $v_t$, we find that
$$
P(\widehat{W} > j)-P(W^\star > j) = t\left[P(\overline{W}
> j)-P(W^s > j)\right].
$$
Comparing this with (\ref{tp5}), recalling the definition of $t$ and
using (\ref{tp6}), we find that (\ref{67}) also gives us a representation
of (\ref{70}).  Continuing the argument as before, the bound (\ref{68}) 
holds too in the present case.

\vspace{2mm}
{\it Step (2)}.  In this part of the proof, we bound $\Lambda_B$,
and thus obtain the bounds of our theorems. In doing so, we will use our stochastic 
ordering and dependence assumptions. The cases where 
$X_1,\ldots,X_n$ are positively and negatively related will 
be discussed separately. In the positive related case, 
the argument of Lemma \ref{res7} shows that
$$
P(W_V+1>j|X_V=0)-P(W_V+1>j|X_V=1)\leq0, \quad j\geq 0.
$$
Noting that $(W_V+1|X_V=1)=_dW^s$, we fix some $l\in\mathbb{Z}^+$ and write
\begin{multline}\label{72}
P(W_V+1>j|X_V=1)-P(W_V+1>j|X_V=0) \\
= P(W_V+1>j+l|X_V=1)-P(W_V+1>j|X_V=0) + \sum_{i=1}^lP(W^s=j+i). 
\end{multline}
Suppose now that there is some $q\in[0,1]$ such that for each $j\geq0$
\begin{eqnarray} 
\nonumber && P(W_V+1>j+l|X_V=1)-P(W_V+1>j|X_V=0) \\
\label{73} &&\qquad\qquad\qquad\qquad\qquad \leq q\,P(W_V=j+l|X_V=1)\\
\label{tp10} &&\qquad\qquad\qquad\qquad\qquad =q\,P(W^s=j+l+1).
\end{eqnarray}
We will show presently that this is implied by the stochastic 
ordering assumption (\ref{58}).  Using (\ref{72}) and (\ref{tp10}), 
we find that
\begin{multline} \label{tp7}
\Lambda_B
\leq qE|\Delta g_B(W^s-l-1)-\Delta g_B(W)| + \sum_{i=1}^lE|\Delta g_B(W^s-i)-\Delta g_B(W)|\\
\leq 2q\|\Delta g_B\|_\infty d_{TV}(\mathcal{L}(W),\mathcal{L}(W^s-l-1))  
+ 2\|\Delta g_B\|_\infty \sum_{i=1}^ld_{TV}(\mathcal{L}(W),\mathcal{L}(W^s-i)).
\end{multline}
Using our bound on $\|\Delta g_B\|_\infty$ and the triangle 
inequality for total variation distance, the first term of (\ref{tp7})
is bounded by
\begin{multline} \label{tp8}
2q\sigma^{-2}\big\{d_{TV}(\mathcal{L}(W),
\mathcal{L}(W^s))+(l+1)d_{TV}(\mathcal{L}(W^s),\mathcal{L}(W^s+1))\big\}\\
\leq 2q\sigma^{-2}\left\{\frac{\sigma}{2\lambda}+(l+1)d_{TV}(\mathcal{L}(W^s),\mathcal{L}(W^s+1))\right\},
\end{multline}
where this last inequality uses (\ref{71}).  Similarly, the second term of
(\ref{tp7})  may be bounded by
\begin{multline} \label{tp9}
2\sigma^{-2}\sum_{i=1}^l\big\{d_{TV}(\mathcal{L}(W),\mathcal{L}(W^s))+i\,d_{TV}(\mathcal{L}(W^s),\mathcal{L}(W^s+1))\big\}\\
\leq \sigma^{-2}\left\{\frac{l\sigma}{\lambda}+l(l+1)d_{TV}(\mathcal{L}(W^s),\mathcal{L}(W^s+1))\right\}.
\end{multline}
Combining (\ref{tp7}), (\ref{tp8}) and (\ref{tp9}) with the bound (\ref{68}) 
yields the desired inequality (\ref{59}).

So, the proof of Theorem \ref{tppos} is completed upon showing 
that the stochastic ordering condition (\ref{58}) implies the inequality (\ref{73}). Writing 
$$
P(W_V=j+l|X_V=1)=P(W_V+1>j+l|X_V=1)-P(W_V>j+l|X_V=1),
$$
for $0\leq j\leq n$, it can be seen that (\ref{73}) is equivalent to
$$
P(W_V+1>j|X_V=0) \geq (1-q)P(W_V+1-l>j|X_V=1) + qP(W_V-l>j|X_V=1),
$$
for $j\geq 0$.  This, in turn, is equivalent to the stochastic ordering
\begin{equation}\label{75}
(W+1|X_V=0) \succeq_{st} (1-v_q)(W-l|X_V=1) + v_q(W-l-1|X_V=1),
\end{equation}
which can be seen using (\ref{tp1}).
Some rearranging shows that the stochastic ordering assumption 
(\ref{58}) implies the stochastic ordering (\ref{75}), hence the result 
of Theorem \ref{tppos}.

We turn our attention now to the case of negative relation, and complete
the proof of Theorem \ref{tpneg}.  When $X_1,\ldots,X_n$ are negatively related, 
one can use a similar argument to the above. We have here that
$$
P(W_V+1>j|X_V=0)-P(W_V+1>j|X_V=1)\geq0, \quad 0\leq j\leq n.
$$
Analogously to the positively related case, we write, 
for some fixed $l\in\mathbb{Z}^+$,  
\begin{multline*}
P(W_V+1>j|X_V=0)-P(W_V+1>j|X_V=1) \\
= P(W_V+1>j|X_V=0) - P(W_V+1>j-l|X_V=1) + \sum_{i=0}^{l-1}P(W^s=j-i).
\end{multline*}
This time, we suppose that there is $q\in[0,1]$ such that
\begin{equation} \label{76}
P(W_V+1>j|X_V=0)-P(W_V+1>j|X_V=1) \leq qP(W_V+1+l=j|X_V=1).
\end{equation}
Following a similar argument to that used in the case of positive
relation, we find that
\begin{equation*}
\Lambda_B \leq
\frac{l+q}{\lambda\sigma} + 
\frac{l(l+2q-1)}{\sigma^2}\,
d_{TV}(\mathcal{L}(W^s),\mathcal{L}(W^s+1)).
\end{equation*}
Combining this with (\ref{68})  gives us the desired 
inequality (\ref{57}).  It remains to show that the stochastic 
ordering assumption (\ref{56}) implies the inequality (\ref{76}), 
which can be done as above.

\section{Another abstract approximation theorem} \label{bd3}

Our aim hereafter is to consider an alternative approximation 
theorem which may be found within the present framework. 
For concreteness, we suppose that the birth rates $\alpha_j$ 
and death rates $\beta_j$ are such that the random variable 
$\pi$ has two parameters available to choose. This will be the 
case in the application presented afterwards.

Let us return to the basic representation (\ref{13}). 
To choose the two parameters of $\pi$, it seems natural, 
in our context, to consider $s=2$ and introduce the two 
conditions $\alpha=\beta$ and $EW_\alpha=EW_\beta$ 
(i.e., $E[\alpha_W(W+1)]=E[\beta_WW]$). With these choices, 
the representation (\ref{13}) becomes
\begin{equation} \label{78}
Eh(W)-Eh(\pi) = \alpha \sum_{i=0}^\infty \Delta^2f(i)\,
E[(W_\alpha-i-1)_+-(W_\beta-i-1)_+].
\end{equation}

Moreover, suppose that one can construct $W_\alpha$ and 
$W_\beta$ on the same probability space in such a way that 
$W_\beta=W_\alpha+Y$ for some random variable $Y$ which 
takes values in the set $\{-1,0,1\}$. Under this assumption, 
$E[W_\alpha]=E[W_\beta]=E[W_\alpha+Y]$, which implies $E[Y]=0$.
It is easily seen that the representation (\ref{78}) can be rewritten as
\begin{eqnarray}\label{79}
\nonumber  Eh(W)-Eh(\pi) &=& -\alpha \sum_{i=0}^\infty 
\Delta^2f(i) \; E[ YI_{(W_\alpha-1 \geq i+1)} + Y_+I_{(W_\alpha-1=i)} ]\nonumber\\
&=& -\alpha \; E[ I_{(Y=1)}\Delta^2f(W_\alpha-1) + Y\Delta f(W_\alpha-1) ].
\end{eqnarray}
Noting that 
\begin{eqnarray*}
|E[ I_{(Y=1)}\Delta^2f(W_\alpha-1) ]| &\leq& 2 \|\Delta 
f\|_\infty \; d_{TV}(\mathcal{L}(W_\alpha),\mathcal{L}
(W_\alpha+1)) \; \sup_W\{P(Y=1|W_\alpha)\},\\
|E[ Y\Delta f(W_\alpha-1) ]| \; &\leq& \; 
\|\Delta f\|_\infty E|E[Y|W_\alpha]| \;\; \leq \;\; 
\|\Delta f\|_\infty \sqrt{\textrm{Var}(E[Y|W_\alpha])},
\end{eqnarray*}
we can immediately bound the right-hand side of (\ref{79}) 
to obtain the following.

\begin{proposition} \label{pbd1}
Suppose that $\alpha=\beta$ and $EW_\alpha=EW_\beta$. If $W_\alpha$ 
and $W_\beta$ can be constructed on the same probability space such that 
\begin{equation}\label{80}
W_\beta=W_\alpha+Y \quad \mbox  {for some random variable 
$Y$ valued in $\{-1,0,1\}$},
\end{equation}
then,
\begin{eqnarray}\label{81}
|Eh(W)-Eh(\pi)|  
 \leq \;\; 2\alpha \|\Delta Sh\|_\infty\, d_{TV}(\mathcal{L}
 (W_\alpha),\mathcal{L}(W_\alpha+1))\, \sup_W\{P(Y=1|W_\alpha)\} \nonumber\\
 +\; \alpha \|\Delta Sh\|_\infty \sqrt{\mbox{Var}(E[Y|W_\alpha])}.
\end{eqnarray}
\end{proposition}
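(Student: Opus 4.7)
The proof just assembles the pieces already set out immediately before the proposition, so the work is largely bookkeeping. The plan is to begin from the representation (\ref{79}), which the authors derive from (\ref{13}) at $s = 2$: the matching conditions $\alpha = \beta$ and $EW_\alpha = EW_\beta$ kill the first two terms of (\ref{13}) to give (\ref{78}), and the coupling assumption (\ref{80}) with $Y \in \{-1, 0, 1\}$ (hence $EY = 0$) converts (\ref{78}) into (\ref{79}) via a brief telescoping that peels the indicator $I_{(Y = 1)}$ off the second-difference piece.

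Given (\ref{79}), I would bound the two terms on the right-hand side separately. For $|E[Y \Delta f(W_\alpha - 1)]|$, conditioning on $W_\alpha$ yields $|E[\Delta f(W_\alpha - 1)\, E[Y \mid W_\alpha]]| \leq \|\Delta f\|_\infty \, E|E[Y \mid W_\alpha]|$, and because $E[Y \mid W_\alpha]$ has mean $EY = 0$, a single application of Cauchy--Schwarz (or Jensen) gives $E|E[Y \mid W_\alpha]| \leq \sqrt{\mathrm{Var}(E[Y \mid W_\alpha])}$. For $|E[I_{(Y = 1)} \Delta^2 f(W_\alpha - 1)]|$, conditioning on $W_\alpha$ rewrites the expectation as $E[\phi(W_\alpha)(\Delta f(W_\alpha) - \Delta f(W_\alpha - 1))]$, where $\phi(w) = P(Y = 1 \mid W_\alpha = w)$. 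Factoring out $M = \sup_w \phi(w)$ and invoking the standard inequality $|E g(X) - E g(Z)| \leq 2 \|g\|_\infty d_{TV}(\mathcal{L}(X), \mathcal{L}(Z))$ applied to the unit shift $\mathcal{L}(W_\alpha) \to \mathcal{L}(W_\alpha + 1)$ (noting that $\mathcal{L}(W_\alpha - 1)$ and $\mathcal{L}(W_\alpha + 1)$ give the same shift distance to $\mathcal{L}(W_\alpha)$) yields the desired bound $2 \|\Delta f\|_\infty \, d_{TV}(\mathcal{L}(W_\alpha), \mathcal{L}(W_\alpha + 1)) \, M$.

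The main technical subtlety lies in this last reduction: the weighting function $\phi(W_\alpha)$ inside the expectation depends on $w$, so pulling its supremum outside while keeping what remains in the form of a clean difference of expectations (with no leftover error from the variation of $\phi$) requires some care. Once both terms are controlled, substituting $f = Sh$ and collecting the common factor $\alpha$ yields the bound (\ref{81}) as announced.
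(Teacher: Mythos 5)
Your argument is, step for step, the one the paper gives: representation (\ref{13}) with $s=2$ together with the conditions $\alpha=\beta$ and $EW_\alpha=EW_\beta$ yields (\ref{78}), the coupling (\ref{80}) with $E[Y]=0$ converts this into (\ref{79}), and the two terms of (\ref{79}) are then bounded exactly as you describe (Cauchy--Schwarz applied to the mean-zero variable $E[Y|W_\alpha]$ for the $\Delta f$ term, and the supremum-times-$d_{TV}$ estimate for the $\Delta^2 f$ term). So there is no divergence of method to report.

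However, the ``technical subtlety'' you flag and then pass over is a genuine gap --- and it is one the paper leaves open as well, since it presents the inequality $|E[I_{(Y=1)}\Delta^2 f(W_\alpha-1)]|\le 2\|\Delta f\|_\infty\, d_{TV}(\mathcal{L}(W_\alpha),\mathcal{L}(W_\alpha+1))\,\sup_W\{P(Y=1|W_\alpha)\}$ as immediate. Read as a statement about an arbitrary weight $\phi(w)=P(Y=1|W_\alpha=w)\in[0,M]$, which is the level at which both your argument and the paper's operate, it is false: after summation by parts the relevant quantity is the total variation of the weighted measure $\phi(j)P(W_\alpha=j)$ against its unit shift, and this is not controlled by $M\,d_{TV}(\mathcal{L}(W_\alpha),\mathcal{L}(W_\alpha+1))$ when $\phi$ oscillates. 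For instance, with $W_\alpha$ uniform on $\{1,\dots,2n\}$, $\Delta f(j)=(-1)^j$ and $\phi$ equal to $M$ on even integers and $0$ on odd ones, one has $E[\phi(W_\alpha)\Delta^2 f(W_\alpha-1)]=M$ while the claimed bound is $M/n$. What legitimises the step in the paper's application is structure that the abstract statement does not invoke: there $I_{(Y=1)}$ is a function of $(v_q,X_T,X_U)$ that is conditionally independent of the argument of $\Delta^2 f$ given the random indices, so the weight genuinely factors out of the expectation and one is left with terms of the form $E[\Delta^2 f(W_{i,j}+2)]$, to which the $d_{TV}$ bound does apply --- this is precisely how $I_1$ and $I_2$ are handled in the derivation of (\ref{newbound}). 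To make Proposition \ref{pbd1} airtight one must either add such a conditional-independence hypothesis or otherwise justify extracting the supremum; your instinct that the reduction ``requires some care'' is correct, but neither your write-up nor the paper supplies that care at this level of generality.
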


Clearly, if such a random variable $Y$ takes values on 
a bounded set other than $\{-1,0,1\}$, a representation 
analogous to (\ref{79}) may still be found,
and a result analogous to Proposition \ref{pbd1} is available. 
We now apply our Proposition \ref{pbd1} to approximate 
a sum of independent indicator random variables.

\begin{example}  
Suppose that $W=X_1+\cdots+X_n$ is the sum of independent Bernoulli 
random variables with success probabilities $p_i$, $1\leq i\leq n$. 
Brown and Xia (2001, Section 3) showed that in this case, one can 
improve on Poisson or binomial approximation
for $W$ by using a so--called polynomial birth--death 
distribution, with the choices $\alpha_j=\alpha$ and 
$\beta_j=\gamma j+j(j-1)$ for some constants 
$\alpha$ and $\gamma$. 

We will follow that approach and choose here $\alpha$ 
and $\gamma$ such that $\alpha=\beta$ and $E[\alpha_W(W+1)]=E[\beta_WW]$. 
Straightforward computations then give 
us expressions for these parameters: 
\begin{equation}\label{82}
\gamma = \lambda^2\lambda_2^{-1}-1-2\lambda+2\lambda_3\lambda_2^{-1}, 
\quad \mbox {and} \quad \alpha = \gamma\lambda+\lambda^2-\lambda_2,
\end{equation}
where $\lambda_k=\sum_{i=1}^np_i^k$ and $\lambda=\lambda_1=E[W]$ (as in Section 5). 
Note that the parameter choices (\ref{82}) are the same as those 
employed by Brown and Xia (2001), who based their selection on minimising 
the error bound obtained in their result. 

To begin with, let us prove that the condition (\ref{80}) 
is satisfied. Since the birth rate is constant 
(as in the Poisson case), we again have that $W_\alpha=W+1$. 
Let us turn our attention to $W_\beta$.  We let $W_i=W-X_i$, 
and $W_{i,j}=W-X_i-X_j$, $0\leq i,j \leq n$ and observe that 
$W(W-1)=\sum_{1\leq i\not=j\leq n}X_iX_j$. By the definition of 
$W_\beta$, we get that
\begin{eqnarray*}
P(W_\beta=k) &=& \alpha^{-1}E\left\{[\gamma W+W(W-1)]I_{(W=k)}\right\}\\
&=& \alpha^{-1}[\gamma\sum_{i=1}^np_iP(W_i+1=k)+
\sum_{1\leq i\not=j\leq n}p_ip_jP(W_{i,j}+2=k)],
\end{eqnarray*}
for $1\leq k\leq n$.  In the spirit of the size-biasing construction of Section \ref{pois}, 
we define now two random indices $T,U\in\{1,\ldots,n\}$ chosen 
according to the distribution
$$
P(T=i,U=j)=\frac{p_ip_j}{\lambda^2-\lambda_2},\quad i\neq j, \quad\quad 
\mbox{and} \quad\quad P(T=U=i)=0.
$$ 
Recall also the definition (\ref{32}) of the random index $V$.  
Combining these definitions with the above, we may write
$$
P(W_\beta=k) \;=\; \alpha^{-1}\gamma\lambda P(W+1-X_V=k) + \alpha^{-1}
(\lambda^2-\lambda_2)P(W+2-X_T-X_U=k),
$$
for $1\leq k\leq n$.  Let $q=\alpha^{-1}\gamma\lambda$; note from (\ref{82}) that 
$0\leq q\leq 1$ whenever $\gamma\geq0$.  In the sequel we will assume
that this is indeed the case. Introduce a Bernoulli random variable $v_q$
 with success probability $q$, independent of all other entries. 
We may then write
\begin{equation*}
W_\beta = v_q(W+1-X_V)+(1-v_q)(W+2-X_T-X_U) = W+1+Y = W_\alpha+Y,
\end{equation*}
where 
\begin{equation}\label{84}
Y=(1-v_q)(1-X_T-X_U)-v_qX_V,
\end{equation}
$Y$ being valued in $\{-1,0,1\}$ with $E[Y]=0$, as desired.

Now, let us evaluate the bound (\ref{81}). First, we need a bound on 
the solution $f$ of the Stein equation in this situation. 
By Theorem 2.10 of Brown and Xia (2001), one knows that
\begin{equation}\label{85}
\sup \lbrace \|\Delta Sh\|_\infty : h(j)=I_{(j\in B)},\; 
B\subseteq{\mathbb{Z}^+} \rbrace \leq \alpha^{-1}.
\end{equation} 
Further, $W$ being a sum of independent indicators, one has 
(from Barbour and Jensen (1989, Lemma 1)) 
\begin{equation}\label{86}
d_{TV}(\mathcal{L}(W),\mathcal{L}(W+1)) \;\leq\; 
\frac{1}{2\sqrt{\sum_{i=1}^n p_i(1-p_i)}}.
\end{equation}
Finally, consider the two conditional terms in (\ref{81}). 
Note from (\ref{84}) that $Y=1$ if and only if $v_q=X_T=X_U=0$, so that
\begin{eqnarray*}
P(Y=1|W) &=& (1-q)P(X_T=X_U=0|W) \;\;=\;\; (1-q)E[(1-X_T)(1-X_U)|W]\\ 
&=& \alpha^{-1} \!\! \sum_{1\leq i\not=j\leq n}p_ip_jE[(1-X_i)(1-X_j)|W].
\end{eqnarray*} 
This probability takes its greatest value when $W=0$, 
with $E[(1-X_i)(1-X_j)|W=0]=1$ for all $i$ and $j$. Hence,
\begin{equation}\label{88}
\sup_W \{P(Y=1|W)\} = \alpha^{-1} \!\! \sum_{1\leq i\not=j\leq n}p_ip_j = 
\alpha^{-1}(\lambda^2-\lambda_2).
\end{equation}      
Now, let $\|Z\|=(E[Z^2])^{1/2}$ be the $L_2$ norm for any random
variable $Z$. Since $T=_d U$ and $E[Y]=0$, we write 
$$
E[Y|W]= -q(E[X_V|W] -E[X_V]) - 2(1-q)(E[X_T|W] -E[X_T]),
$$
and thus 
\begin{eqnarray*}
\sqrt{\mbox{Var}(E[Y|W])} &=& \|E[Y|W]\|\\
&\leq & q\sum_{j=1}^n \|E[X_j|W] -E[X_j]\| P(V=j)\\ 
&&\qquad + 2(1-q) \sum_{j=1}^n \|E[X_j|W] -E[X_j]\| P(T=j) \\
&\leq& (q+2(1-q)) \max_{1\leq j\leq n} \sqrt{\mbox{Var}(E[X_j|W])}.
\end{eqnarray*}
When $p_j=p$ for $j=1,\ldots,n$, $E[X_j|W]=W/n$ and so the bound becomes the 
equality
\begin{equation}\label{87}
\sqrt{\mbox{Var}(E[Y|W])} = (2-q) \sqrt{\mbox{Var}(W/n)}.
\end{equation}
Inserting (\ref{85}), (\ref{86}), (\ref{88}) and (\ref{87}) in (\ref{81}) 
then provides the following bound:
\begin{equation*}
d_{TV}(\mathcal{L}(W),\mathcal{L}(\pi)) \;\leq\; 
\frac{p}{(1-p)\sigma} +\frac{(2-q)\sigma}{n} =O(p/\sqrt{\lambda}),
\end{equation*}
where $\sigma^2=\mbox{Var}(W)=np(1-p)$.

By exploring the explicit structure of the auxiliary variable $Y$, it is possible to 
derive better bounds. Throughout this part we let $\bar{a}=1-a$ for any $a\in\mathbb{R}$
and $\sigma_k=\sqrt{\sum_{i=k+1}^n\rho_i}$, where $\rho_i$ is the $i$th largest number of $p_1(1-p_1),\ldots,p_n(1-p_n)$.  From Barbour and Jensen (1989, Lemma 1) we have that for all 
$i,j=1,\ldots,n$ and $i\not=j$,
$$
2d_{TV}(\mathcal{L}(W_i),\mathcal{L}(W_i+1))\leq\sigma_1^{-1} \quad\mbox{and}\quad 2d_{TV}(\mathcal{L}(W_{i,j}),\mathcal{L}(W_{i,j}+1))\leq\sigma_2^{-1}.
$$
Notice that, from representation (\ref{84}),
\begin{equation}\label{90}
I_{(Y=1)}=\bar{v}_q \bar{X}_T\bar{X_U}\;,\; 
I_{(Y=-1)}=v_q X_V+\bar{v}_q X_T X_U.\end{equation}
The derivations below are based on the conditional independence of
$X_T$ and $W_T$, given $T$ and similarly $X_U$ and $W_U$, given $U$
and $X_V$ and $W_V$, given $V$.  By substituting (\ref{90}) in (\ref{79}), integrating with respect to $v_q$,  
separating linear and quadratic terms and noticing that $T=_d U$,  
we derive, after some simple calculations,
\begin{eqnarray*}
I&=& Eh(W)-Eh(\pi)\\
&=& -\alpha E[\bar{v}_q \bar{X}_T\bar{X}_U \Delta f(W+1)] + 
\alpha E[(v_q X_V  + \bar{v}_q X_T X_U)\Delta f(W)] \\
&=&-\alpha \bar{q} E[X_T X_U \Delta^2 f(W)] \\
&&\quad\quad +2\alpha \bar{q} E[X_T\Delta^2 f(W)]\\
&&\quad\quad - \alpha( \bar{q} E[\Delta f(W+1)]  - 
E[(2\bar{q}X_T + q X_V)\Delta f(W)])\\
&=&I_1+I_2+I_3.
\end{eqnarray*}
Using the conditional independence of 
$W_{T,U}$ and $X_T, X_U$ given $T$ and $U$, the first term $I_1$ is bounded by
\begin{eqnarray*}
|I_1|&=& \alpha\bar{q}\big|EE[X_T X_U|T,U] E[ \Delta^2 f(W_{T,U}+2)]\big|\\
&\leq& 2\alpha\bar{q}\|\Delta f\|_\infty E[X_T X_U]  \max_{i\neq j}\big\{d_{TV}(\mathcal{L}(W_{i,j}),\mathcal{L}(W_{i,j}+1))\big\}
\;\leq\; \frac{\lambda_2^2-\lambda_4}{\alpha \sigma_2}.
\end{eqnarray*}
By conditioning on $T$,
\begin{eqnarray*}
|I_2|&=& 2\alpha \bar{q}\big|EE[X_T|T] E[ \Delta^2 f(W_{T}+1)]\big|\\
&\leq& 4\alpha\bar{q}\|\Delta f\|_\infty E[X_T] \max_{i}\big\{d_{TV}(\mathcal{L}(W_{i}),\mathcal{L}(W_{i}+1))\big\}
\;\leq\; \frac{2(\lambda \lambda_2-\lambda_3)}{\alpha \sigma_1}.
\end{eqnarray*}
To bound $I_3$, we first notice that since $E[Y]=0$, 
$$
\bar{q}=2\bar{q}E[X_T] + q E[X_V].
$$
Thus,
\begin{eqnarray*}
|I_3| &=& \big|2\alpha\bar{q} E\big\{X_T \big(E[\Delta f(W_T+1)|T] - 
E[\Delta f(W_T+X_T+1)]\big)\big\}\\
&&\quad + \alpha q E\big\{X_V \big(E[\Delta f(W_V+1)|V]  
- E[\Delta f(W_V+X_V+1)]\big)\big\}\big|\\
&\leq& 
2\alpha \big\{2\bar{q} E[\{E(X_T|T)\}^2] \\
&&\qquad\qquad\qquad + qE[\{E(X_V|V)\}^2]\big\} \|\Delta f\|_\infty 
\max_{i}\big\{d_{TV}(\mathcal{L}(W_{i}),\mathcal{L}(W_{i}+1))\big\}\\
&\leq&\frac{2(\lambda\lambda_3-\lambda_4)}{\alpha\sigma_1}+\frac{\gamma\lambda_3}{\alpha\sigma_1}.
\end{eqnarray*}
By combining the bounds on $I_1,I_2$ and $I_3$ we derive the following.
\begin{proposition} \label{cor_ex8}
With $W$ and $\pi$ as above,
\begin{equation} \label{newbound}
d_{TV}(\mathcal{L}(W),\mathcal{L}(\pi)) \;\leq\; 
\frac{\lambda_2^2-\lambda_4}{\alpha \sigma_2}
+\frac{2(\lambda \lambda_2-\lambda_3)}{\alpha \sigma_1}
+\frac{2(\lambda\lambda_3-\lambda_4)}{\alpha\sigma_1}+\frac{\gamma\lambda_3}{\alpha\sigma_1}.
\end{equation}
\end{proposition}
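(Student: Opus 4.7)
The plan is to combine the general representation from Proposition \ref{pbd1} with the explicit size-biased construction of $W_\beta$ that the preceding paragraphs have set up. Concretely, I would start from the identity (\ref{79}), substitute the representation (\ref{84}) for the auxiliary variable $Y$ in the form (\ref{90}), and integrate out the independent Bernoulli $v_q$. Because $v_q$, $(T,U)$ and $V$ are all independent of the remaining randomness in the $X_i$'s, this integration cleanly separates the expectation into a $\bar q$-weighted quadratic-in-indicators piece and a $q$-weighted linear piece, producing a decomposition $Eh(W)-Eh(\pi)=I_1+I_2+I_3$ with $I_1$ carrying $X_T X_U\,\Delta^2 f(W)$, $I_2$ carrying $X_T\,\Delta^2 f(W)$, and $I_3$ collecting the remaining $\Delta f$ terms.

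The next step is to bound $I_1$ and $I_2$ using the conditional independence structure: given $T$ and $U$, the indicator pair $(X_T,X_U)$ is independent of $W_{T,U}$, so writing $W=W_{T,U}+X_T+X_U$ lets me pull out $E[X_T X_U]=\lambda_2^2-\lambda_4$ (resp.\ $E[X_T]=\lambda_2$ for $I_2$) and reduce the remaining expectation to a total-variation comparison of $\mathcal{L}(W_{T,U})$ with its unit shift. The Barbour--Jensen bound (\ref{86}), applied in the sharper form with $\sigma_2$ (resp.\ $\sigma_1$), then yields the first two summands of (\ref{newbound}) together with the standard $\|\Delta Sh\|_\infty\le\alpha^{-1}$ bound from (\ref{85}).

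The main obstacle is $I_3$, because it is first-order in $\Delta f$ and does not obviously telescope. Here the key trick is to exploit $E[Y]=0$ in the form $\bar q=2\bar q\,E[X_T]+q\,E[X_V]$, which lets me rewrite the $\bar q\,E[\Delta f(W+1)]$ contribution as a compensator for the $X_T$ and $X_V$ terms. Conditioning then on $T$ (resp.\ $V$) and using the conditional independence of $X_T$ from $W_T$ (resp.\ $X_V$ from $W_V$), the differences $E[\Delta f(W_T+1)\mid T]-E[\Delta f(W_T+X_T+1)]$ are again controlled by $d_{TV}(\mathcal{L}(W_i),\mathcal{L}(W_i+1))\le(2\sigma_1)^{-1}$, each picking up a factor $E[\{E(X_T\mid T)\}^2]=\lambda_3/\lambda_2$ or $E[\{E(X_V\mid V)\}^2]=\lambda_3/\lambda$. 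After tidying the arithmetic this produces the last two summands of (\ref{newbound}).

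Finally I would assemble the three bounds and record them as stated; no further estimation of $\alpha$, $\gamma$ or $\sigma_k$ is needed, since the proposition is phrased in these quantities directly. The most delicate bookkeeping in the whole argument is lining up the $\bar q$ and $q$ coefficients in $I_3$ so that the cancellation forced by $E[Y]=0$ is visible; once that is done, the remaining estimates are the routine conditional-independence computations sketched above.
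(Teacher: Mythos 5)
Your proposal follows the paper's own proof essentially verbatim: the same $I_1+I_2+I_3$ decomposition obtained by substituting (\ref{90}) into (\ref{79}) and integrating out $v_q$, the same use of $E[Y]=0$ in the form $\bar q=2\bar q\,E[X_T]+q\,E[X_V]$ to handle $I_3$, and the same conditional-independence plus Barbour--Jensen estimates with $\sigma_1$ and $\sigma_2$. One small bookkeeping caution: your stated intermediate values such as $E[X_TX_U]=\lambda_2^2-\lambda_4$ and $E[\{E(X_T\mid T)\}^2]=\lambda_3/\lambda_2$ omit the normalisation $\lambda^2-\lambda_2$ in the law of $(T,U)$ (it is exactly cancelled by the factor $\alpha\bar q=\lambda^2-\lambda_2$), but this does not affect the final bounds, which are correct.
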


Let us conclude by comparing our result with that of Brown and Xia (2001, Theorem 3.1), 
who obtain
\begin{equation} \label{bxbound}
d_{TV}(\mathcal{L}(W),\mathcal{L}(\pi)) \;\leq\; \frac{\gamma\lambda_3}{\alpha\sigma_1} + \frac{2\lambda\lambda_2}{\alpha\sigma_2}.
\end{equation}
When $p_i= p\to 0$ for each $i$ and $\lambda\to \infty$, both the bounds (\ref{newbound}) and 
(\ref{bxbound}) are asymptotically equivalent to $3p^2/\sqrt{\lambda}$.
\end{example}

\acks

Fraser Daly gratefully acknowledges the financial support of the Schweizerische Nationalfonds, and thanks 
the Belgian Fonds National de la Recherche Scientifique for support during a visit to the Universit\'e
Libre de Bruxelles. Thanks are also due to Andrew Barbour for several useful discussions.

\end{document}